\documentclass[12pt,reqno]{amsart}
\usepackage[margin=1in]{geometry}
\usepackage{amssymb,amsfonts,amsmath,mathrsfs,etoolbox,mathtools,bm}
\usepackage[breaklinks=true,colorlinks=true,linkcolor=teal,citecolor=teal,filecolor=teal,urlcolor=black!50!blue]{hyperref}
\hypersetup{linktocpage}
\usepackage[hiresbb]{graphicx,xcolor}
\allowdisplaybreaks[4]

\newtheorem{theorem}{Theorem}[section]
\newtheorem{corollary}[theorem]{Corollary}
\newtheorem{lemma}[theorem]{Lemma}
\newtheorem{problem}[theorem]{Problem}

\theoremstyle{definition}
\newtheorem{remark}[theorem]{Remark}

\numberwithin{equation}{section}

\newcommand{\N}{\mathbb{N}}
\newcommand{\R}{\mathbb{R}}
\newcommand{\Z}{\mathbb{Z}}
\newcommand{\GL}{\mathrm{GL}}
\newcommand{\sumast}{\mathop{\sum\nolimits^{\mathrlap{\ast}}}}

\renewcommand{\Im}{\mathrm{Im}}
\renewcommand{\Re}{\mathrm{Re}}
\renewcommand{\tilde}{\widetilde}
\renewcommand{\epsilon}{\varepsilon}

\patchcmd{\section}{\scshape}{\bfseries}{}{}
\makeatletter
\renewcommand{\@secnumfont}{\bfseries}
\makeatother
\newcommand{\norm}[1]{\left\lVert#1\right\rVert}
\makeatletter\newcommand{\tpmod}[1]{{\@displayfalse \pmod{#1}}}

\setcounter{tocdepth}{1}

\begin{document}

\title{Non-Linear Additive Twists of $\mathrm{GL}_{3}$ Hecke Eigenvalues}

\author{Ikuya Kaneko}
\address{The Division of Physics, Mathematics and Astronomy, California Institute of Technology, 1200 E. California Blvd., Pasadena, CA 91125, USA}
\email{ikuyak@icloud.com}
\urladdr{\href{https://sites.google.com/view/ikuyakaneko/}{https://sites.google.com/view/ikuyakaneko/}}

\author{Wing Hong Leung}
\address{Department of Mathematics, Mailstop 3368, Texas A\&M University, College Station, TX 77843-3368, USA}
\email{josephleung102792@gmail.com}
\urladdr{\href{https://sites.google.com/view/winghongleung/}{https://sites.google.com/view/winghongleung/}}

\thanks{IK acknowledges the support of the Masason Foundation.}

\subjclass[2020]{11F66, 11M41 (primary); 11F55 (secondary)}

\keywords{Additive twists, $\mathrm{GL}_{3}$ Hecke eigenvalues, circle method, Vorono\u{\i}, iteration}

\date{\today}

\dedicatory{}

\begin{abstract}
We bound non-linear additive twists of $\mathrm{GL}_{3}$ Hecke eigenvalues, improving upon the work of Kumar--Mallesham--Singh (2022). The proof employs the DFI circle method with standard manipulations (Vorono\u{\i}, Cauchy--Schwarz, lengthening, and additive reciprocity). The main novelty includes the conductor lowering mechanism, albeit sacrificing some savings to remove an analytic oscillation, followed by the iteration \textit{ad infinitum} of Cauchy--Schwarz and Poisson. The resulting character sums are estimated via the work of Adolphson--Sperber (1993). As an application, we prove nontrivial bounds for the first moment of $\GL_{3}$~Hardy's function, which corresponds to the cubic moment of Hardy's function studied by Ivi\'{c} (2012).
\end{abstract}

\maketitle
\tableofcontents

\section{Introduction}\label{introduction}

\subsection{Statement of Main Results}\label{statement-of-main-results}
Let $\mathbb{A}_{\mathbb{Q}}$ be the ring of ad\`{e}les over $\mathbb{Q}$, and let $\mathfrak{F}_{k}$ be the universal family of all cuspidal automorphic representations of $\mathrm{GL}_{k}(\mathbb{A}_{\mathbb{Q}})$ with unitary~central character, which we normalise to be trivial on the diagonally embedded copy of the positive reals so that $\mathfrak{F}_{k}$ is discrete; see, for example, Brumley~\cite[Corollary~9]{Brumley2006}. If $(a_{n})$ represents a sequence of arbitrary complex numbers, then a well-known problem in number theory is~to estimate nontrivially its correlations against \emph{non-linear additive twists} of the shape\footnote{One often considers a dyadically smoothed version of~\eqref{non-linear} by inserting a smooth weight function therein, but it is aesthetically convenient in the present paper to deal with the unsmoothed version~\eqref{non-linear}.}
\begin{equation}\label{non-linear}
\sum_{n \leq T} a_{n} e(\alpha n^{\beta}),
\end{equation}
where $T \geq 1$ and $\alpha, \beta \in \R$. The summand is called an additive twist of $(a_{n})$ when $\beta = 1$. Of particular interest is when $(a_{n})$ has an automorphic origin due to its relevance to moments of families of $L$-functions and subconvex bounds thereof. For example, if $(a_{n})$ arises from the Hecke eigenvalues of $\varphi \in \mathfrak{F}_{2}$, then the study of~\eqref{non-linear} has a distinguished history. In particular, Jutila~\cite{Jutila1987} improves upon a \emph{Wilton-type bound} and shows uniformly in $\alpha \in \R$~that
\begin{equation}\label{Jutila}
\sum_{n \leq T} \lambda_{\varphi}(n) e(\alpha n) \ll \sqrt{T},
\end{equation}
where $\lambda_{\varphi}(n)$ is the $n$-th Hecke eigenvalue of $\varphi$. 

Given $\pi \in \mathfrak{F}_{k}$, emblematic generalisations of~\eqref{Jutila} include
\begin{equation}\label{additive-twists}
\sum_{n \leq T} \lambda_{\pi}(n) e(\alpha n),
\end{equation}
where $\lambda_{\pi}(n) \coloneqq \lambda_{\pi}(1, n)$ is the $n$-th Hecke eigenvalue of $\pi$. For $k = 3$, the best known bound of Miller~\cite[Theorem 1.1]{Miller2006} goes halfway between the trivial and optimal bound, namely
\begin{equation}\label{Miller}
\sum_{n \leq T} \lambda_{\pi}(n) e(\alpha n) \ll_{\pi, \epsilon} T^{\frac{3}{4}+\epsilon}
\end{equation}
uniformly in $\alpha \in \R$. For $k \geq 4$, nontrivial bounds for~\eqref{additive-twists} exhibit a hindrance of progress~at the current state of knowledge; see~\cite{RenYe2015-2,RenYe2015} for further details.

If $\pi \in \mathfrak{F}_{3}$ and $\alpha, \beta > 0$, then the result of Ren--Ye~\cite[Theorem~2]{RenYe2015} states\footnote{There is a typographical error in the exponent of $\alpha$ in~\cite[Equation (3)]{KumarMalleshamSingh2022}.} that
\begin{equation*}
\mathcal{A}_{\pi}(T, \alpha, \beta) \coloneqq \sum_{n \leq T} \lambda_{\pi}(n) e(\alpha n^{\beta}) \ll_{\pi, \beta} (T^{\frac{2}{3}}+(\alpha T^{\beta})^{\frac{3}{2}}) \log T
\end{equation*}
for $\beta \ne \frac{1}{3}$, and
\begin{equation*}
\mathcal{A}_{\pi}(T, \alpha, \beta) \ll_{\pi, \epsilon} (1+\alpha^{\frac{1}{14}+\epsilon})(T^{\frac{2}{3}} \log T+\alpha^{2} T^{\frac{1}{3}+\epsilon})
\end{equation*}
for $\beta = \frac{1}{3}$. Furthermore, Kumar--Mallesham--Singh~\cite[Theorem 1]{KumarMalleshamSingh2022} prove
\begin{equation}\label{eq:alpha-beta}
\mathcal{A}_{\pi}(T, \alpha, \beta) \ll_{\pi,  \epsilon} \alpha \sqrt{\beta} T^{\frac{3}{4}+\frac{9\beta}{28}+\epsilon}
\end{equation}
for $\frac{1}{3} < \beta < \frac{7}{9}$. In particular, for fixed $\alpha \in \R$,~\eqref{eq:alpha-beta} implies
\begin{equation*}
\mathcal{A}_{\pi}(T, \alpha) \coloneqq \mathcal{A}_{\pi} \left(T, \alpha, \frac{2}{3} \right)
 = \sum_{n \leq T} \lambda_{\pi}(n) e(\alpha n^{\frac{2}{3}}) \ll_{\pi, \epsilon} T^{\frac{27}{28}+\epsilon}.
\end{equation*}
This work provides a substantial improvement over~\eqref{eq:alpha-beta}. We regard $\alpha$ and $\beta$ as fixed, but~our method generalises without any problems to let them vary in their appropriate regimes.
\begin{theorem}\label{main}
Let $\pi \in \mathfrak{F}_{3}$, and let $\alpha \in \R$, $\frac{5}{14} < \beta < \frac{4}{5}$. Then we have for any $\epsilon > 0$ that
\begin{equation*}
\mathcal{A}_{\pi}(T, \alpha, \beta) \ll_{\pi, \alpha, \beta, \epsilon} 
T^{\frac{2}{3}+\frac{5\beta}{12}+\epsilon}+T^{\frac{7}{8}+\varepsilon}+T^{\frac{19}{14}-\beta+\epsilon}.
\end{equation*}
\end{theorem}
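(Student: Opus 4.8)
The plan is to run the DFI delta method on a smoothly dyadically decomposed piece and then iterate Vorono\u{\i}--Poisson duality until the arithmetic weights are exhausted. First I would insert a smooth partition of unity and reduce Theorem~\ref{main} to a bound for
\[
S_{N} \coloneqq \sum_{n} \lambda_{\pi}(n)\, e(\alpha n^{\beta})\, V\!\left(\frac{n}{N}\right), \qquad 1 \le N \ll T,
\]
against which the trivial estimate is $S_{N} \ll N^{1+\epsilon}$, say via the Cauchy--Schwarz inequality and the Rankin--Selberg bound $\sum_{n \le N} |\lambda_{\pi}(n)|^{2} \ll N^{1+\epsilon}$. Introducing a second variable $m$ and detecting $n = m$ through the DFI expansion of $\delta(n-m)$ with moduli $q \le Q$ separates the Hecke eigenvalue $\lambda_{\pi}(n)$ from the analytic twist $e(\alpha m^{\beta})$, at the price of a sum over $q$, a sum over $a \bmod q$, and a short auxiliary integral. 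The decisive point here is a \emph{conductor-lowering} choice of $Q$, taken well below the balanced value $Q \asymp \sqrt{N}$ and calibrated so that, once the $m$-variable is opened by Poisson summation, the contribution of $e(\alpha m^{\beta})$ becomes inert and only a restricted range of moduli $q$ together with a thin set of residues $a$ survive; this is the ``sacrifice some savings to remove an analytic oscillation'' step, and it is what keeps the ensuing exponential sums purely arithmetic.

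To the $n$-sum I would then apply the $\GL_{3}$ Vorono\u{\i} summation formula, which produces a dual sum of length roughly $q^{3}/N$ weighted by $\lambda_{\pi}(1,\cdot)$ against a Kloosterman sum, while to the now non-oscillatory $m$-sum I would apply Poisson summation. An application of additive reciprocity then flips the modulus in the argument of the dual Hecke eigenvalues, and after lengthening the dual $n$-sum one arrives at an expression of the schematic shape
\[
\sum_{q \sim Q} \frac{1}{q} \sum_{n} \lambda_{\pi}(1,n)\, \mathfrak{C}_{q}(n,\dots)\, \mathcal{I}(n,\dots),
\]
with $\mathfrak{C}_{q}$ a complete character sum to a modulus built from $q$ and $\mathcal{I}$ a benign archimedean integral.

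To remove the surviving $\lambda_{\pi}(1,n)$ I would Cauchy--Schwarz in the $q$-aspect together with the outer variables, discard $\sum_{n} |\lambda_{\pi}(1,n)|^{2}$ by Rankin--Selberg, and apply Poisson in $n$; this replaces the automorphic sum by a purely exponential one carrying a complete character sum to a modulus built from two copies of the $q$'s. I would then iterate: each further round of Cauchy--Schwarz in one of the remaining variables followed by Poisson shortens the relevant sum and spawns a new complete exponential sum, the lengths contracting by a fixed ratio so that the recursion converges, and I would pass to its limit. At every stage the arithmetic is controlled by multi-dimensional complete exponential sums over $\Z/q\Z$, which I would bound by the work of Adolphson--Sperber: their Newton-polytope criterion yields square-root cancellation once the governing Laurent polynomial is checked to be non-degenerate. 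Collecting, for each dyadic $N$, the diagonal contributions from the successive Cauchy--Schwarz steps together with the off-diagonal (Adolphson--Sperber) terms, optimising over $Q$ and over the depth at which the iteration is truncated, and summing over $N \ll T$ then yields the three terms of Theorem~\ref{main}: $T^{2/3+5\beta/12+\epsilon}$ and $T^{19/14-\beta+\epsilon}$ from the two extreme regimes of $\beta$, and $T^{7/8+\epsilon}$ from a balancing range.

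The technical heart, and the main obstacle, is controlling the iteration rather than any single manipulation: one must verify that the savings accumulated in the first two dualities are not eroded as the recursion proceeds (which is exactly what forces the careful, conductor-lowered choice of $Q$), that the diagonal terms created at each Cauchy--Schwarz stage remain admissible, and that the complete exponential sums appearing at every level satisfy the Adolphson--Sperber non-degeneracy hypothesis, so that genuine square-root cancellation persists all the way to the limit.
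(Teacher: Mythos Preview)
Your outline has the right high-level shape (DFI $\delta$-method, Vorono\u{\i} on the Hecke variable, Poisson on the twisted variable, Cauchy--Schwarz plus lengthening, Poisson again, reciprocity, iterate, Adolphson--Sperber), but two of the load-bearing steps are misidentified, and as written the argument would not run.

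First, you have the ``sacrifice some savings to remove an analytic oscillation'' step backwards. The non-linear phase $e(\alpha n^{\beta})$ is \emph{never} made inert: after Poisson on that variable the dual sum has length $\asymp Q/N^{1-\beta}$, and the phase survives inside an integral that is bounded only by the second-derivative test, contributing $N^{-\beta/2}$. What is actually removed is the \emph{Vorono\u{\i}} phase $e(3(mn)^{1/3}/q)$, which after the first two dualities couples the dual Hecke variable $m$ to the modulus $q$; it is separated by writing $e(x)\approx x^{-1/2}\int_{|t|\ll x} x^{it}\,dt$ via Mellin inversion, at the cost of an extra $t$-integral of length $(MN)^{1/3}/C_0$. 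Only after this decoupling can one Cauchy--Schwarz in $m$ without reinflating an $m$-dependent oscillation. Your proposed mechanism---take $Q$ small enough that Poisson kills $e(\alpha m^{\beta})$---would require $Q\ll N^{1-\beta}$, which for $\beta>\tfrac12$ is incompatible with $Q^{2}\gg N$ needed for the $\delta$-method, and in any case is not what the paper does (the final choice is $Q=\min\{N^{(2-\beta)/3},\sqrt{N}\}$, equal to $\sqrt{N}$ throughout $\beta\le\tfrac12$).

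Second, you do not account for the term $T^{19/14-\beta+\epsilon}$. It does not come from any ``extreme regime of $\beta$'' in the main analysis; it is the smoothing error. Approximating $\mathbf{1}_{(N/2,N]}$ by a $Y$-inert weight introduces an error of $O(N^{1+\epsilon}/Y)$ many terms, each bounded only pointwise by Kim--Sarnak $|\lambda_{\pi}(n)|\ll n^{5/14+\epsilon}$, giving $N^{19/14+\epsilon}/Y$; one must then take $Y\le N^{\beta}T^{-\epsilon}$ so that the weight oscillates no faster than $e(\alpha n^{\beta})$, whence the term $T^{19/14-\beta+\epsilon}$. A plain partition of unity will not produce this. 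Relatedly, the iteration in the paper is over the \emph{modulus} variables $c_{1}',c_{2}',\dots$ (pieces of $q$), not over the dual Hecke variable, and the lengthening parameter $R$ is chosen precisely so that each Poisson collapses to zero frequency---this is what makes the recursion converge and keeps the character sums in the Adolphson--Sperber framework.
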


Theorem~\ref{main} improves upon~\cite[Theorem~1]{KumarMalleshamSingh2022} for $\frac{17}{37} < \beta < \frac{4}{5}$. The proof is predicated upon the Duke--Friedlander--Iwaniec circle method together with the fairly standard toolbox including Vorono\u{\i}, Poisson, Cauchy--Schwarz, lengthening, and additive reciprocity.~We use conductor lowering, but sacrifice some savings to eliminate a cumbersome analytic oscillation after Vorono\u{\i}. Another key input is an iteration \textit{ad infinitum} of Cauchy--Schwarz and Poisson summation to attain additional savings. We highlight that this iteration gives rise to certain intricate character sums that are viewed as a simpler analogue of what is handled in~\cite{AggarwalLeungMunshi2022}. This is where a result of Adolphson--Sperber~\cite{AdolphsonSperber1993} comes into play. They extended Dwork's cohomology theory from smooth projective hypersurfaces in characteristic $p$ to a sufficiently wide class of exponential sums.

\begin{corollary}\label{corollary}
Let $\pi \in \mathfrak{F}_{3}$, and let $\alpha \in \R$. Then we have for any $\epsilon > 0$ that
\begin{equation*}
\mathcal{A}_{\pi}(T, \alpha) \ll_{\pi, \alpha, \epsilon} T^{\frac{17}{18}+\epsilon}.
\end{equation*}
\end{corollary}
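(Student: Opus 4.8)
Since $\mathcal{A}_{\pi}(T, \alpha)$ is by definition $\mathcal{A}_{\pi}(T, \alpha, \frac{2}{3})$, the plan is simply to specialise Theorem~\ref{main} to the exponent $\beta = \frac{2}{3}$. This is admissible because $\frac{5}{14} < \frac{2}{3} < \frac{4}{5}$, so it remains only to evaluate the three terms appearing in the bound of Theorem~\ref{main} at $\beta = \frac{2}{3}$ and to retain the largest.

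First, $\frac{2}{3} + \frac{5\beta}{12} = \frac{2}{3} + \frac{5}{12} \cdot \frac{2}{3} = \frac{2}{3} + \frac{5}{18} = \frac{17}{18}$. The second term is $\frac{7}{8}$ outright. Finally, $\frac{19}{14} - \beta = \frac{19}{14} - \frac{2}{3} = \frac{29}{42}$. Since $\frac{29}{42} < \frac{7}{8} < \frac{17}{18}$, the first term dominates, and we conclude that $\mathcal{A}_{\pi}(T, \alpha) \ll_{\pi, \alpha, \epsilon} T^{\frac{17}{18}+\epsilon}$, as claimed.

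I anticipate no genuine obstacle here: the corollary is a purely numerical specialisation of the main theorem, the only points to verify being that $\frac{2}{3}$ lies strictly inside the admissible window $\left(\frac{5}{14}, \frac{4}{5}\right)$ and that the maximum of the three exponents equals $\frac{17}{18}$. For orientation, one may note that $\frac{17}{18} < \frac{27}{28}$, so this does improve the bound $T^{\frac{27}{28}+\epsilon}$ for $\mathcal{A}_{\pi}(T,\alpha)$ deduced from~\eqref{eq:alpha-beta}.
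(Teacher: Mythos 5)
Your proposal is correct and is exactly the intended argument: the paper obtains Corollary~\ref{corollary} by specialising Theorem~\ref{main} to $\beta = \frac{2}{3}$ (which is permissible since $\frac{5}{14} < \frac{2}{3} < \frac{4}{5}$) and noting that $\frac{17}{18}$ dominates $\frac{7}{8}$ and $\frac{29}{42}$. Your arithmetic is accurate and nothing is missing.
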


Corollary~\ref{corollary} has an application to the first moment of $\GL_{3}$ Hardy's function; see Section~\ref{applications}.

We now formulate a smoothed version of Theorem~\ref{main}. Let $1\leq Y\ll T^{\beta-\varepsilon}$ be a parameter at our disposal, and let $W$ be a smooth function supported on $[1,2]$ such that $x^jW^{(j)}(x) \ll_{j} Y^j$ for all fixed $j \in \N_{0}$.
\begin{theorem}\label{main-2}
Keep the notation and assumptions as above.  Then we have for any $\epsilon > 0$ that
\begin{equation*}
\mathcal{A}_{\pi}^{Y}(T, \alpha, \beta):=\sum_{n = 1}^{\infty} \lambda_{\pi}(n) e(\alpha n^{\beta}) W \left(\frac{n}{T} \right) 
\ll_{\pi, \alpha, \beta, \epsilon} T^{\frac{2}{3}+\frac{5\beta}{12}+\epsilon}+T^{\frac{7}{8}+\varepsilon}.
\end{equation*}
\end{theorem}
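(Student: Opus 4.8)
The plan is to prove Theorem~\ref{main-2} by the Duke--Friedlander--Iwaniec circle method; the sharp cutoff of Theorem~\ref{main} (and the attendant term $T^{19/14-\beta+\epsilon}$) would then follow by a routine smoothing argument, splitting $[1,T]$ dyadically, approximating the indicator by a weight of the present type, and bounding the transition ranges trivially via Rankin--Selberg. So write $N=T$, put $f(x)=\alpha x^{\beta}$, and note that the hypothesis $Y\ll T^{\beta-\epsilon}$ is exactly what makes the stationary phase below legitimate, since $|f''(x)|x^{2}\asymp N^{\beta}\gg Y$ for $x\asymp N$. The first step is to decouple $\lambda_{\pi}(n)$ from $e(f(n))$: introduce a companion variable $m$ and detect $n=m$ through the DFI delta expansion
\begin{equation*}
\delta(n=m)=\frac{1}{Q}\sum_{q\le Q}\frac{1}{q}\sumast_{a\bmod q}e\!\left(\frac{a(n-m)}{q}\right)\int_{\R}h(q,x)\,e\!\left(\frac{(n-m)x}{qQ}\right)dx,
\end{equation*}
with $h$ the standard DFI weight and $Q$ at our disposal; I would also build in the conductor-lowering device, so that the effective modulus splits as $q_{0}q$ and $Q$ may be taken as small as $\asymp\sqrt{N}/q_{0}$. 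After this the $n$-sum carries only an additive character (plus a harmless $x$-integral), while the $m$-sum carries $e(f(m))$ twisted by an additive character.

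Next I would apply the two dual summation formulae in parallel. To the $n$-sum, apply $\GL_{3}$ Vorono\u{\i}: $n\asymp N$ is dualised to $\tilde n\ll (q_{0}q)^{3+\epsilon}/N$ --- shortened by the conductor lowering --- with $\lambda_{\pi}(n)$ replaced by $\lambda_{\pi}(\tilde n)$ up to Hecke relations, a hyper-Kloosterman sum modulo $q_{0}q$, and an integral transform of $h$ carrying a cube-root oscillation of the shape $e\!\left(c\,(N\tilde n)^{1/3}/(q_{0}q)\right)$. To the $m$-sum, apply Poisson modulo $q$; stationary phase in the resulting integral (critical point $x\asymp N$, dual variable $\tilde m\asymp qN^{\beta-1}$) yields a complete additive character sum and a secondary phase. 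The cube-root term is the ``analytic oscillation'' flagged in the introduction; rather than wrestle with a genuinely multidimensional stationary-phase integral I would kill it by a few integrations by parts and the trivial bound, at the cost of a power of $T$ --- the advertised ``sacrifice of savings,'' which I expect to be the origin of the $\beta$-free term $T^{7/8+\epsilon}$ (note $\tfrac{2}{3}+\tfrac{5\beta}{12}=\tfrac{7}{8}$ precisely at $\beta=\tfrac12$, below which that term dominates).

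This leaves a triple sum over $q$, $\tilde n$, $\tilde m$ of $\lambda_{\pi}(\tilde n)$ against a complete character sum modulo $q_{0}q$ and a small weight. The iterative heart now enters. I would Cauchy--Schwarz in $\tilde n$ to strip $\lambda_{\pi}(\tilde n)$, using $\sum_{\tilde n\le X}|\lambda_{\pi}(\tilde n)|^{2}\ll X^{1+\epsilon}$; opening the square couples two copies of the surviving variables modulo a larger modulus, and Poisson in $\tilde n$ then trades the current length for a shorter dual length at the price of a longer, more structured complete character sum. Each Cauchy--Schwarz--Poisson pass shortens the active length geometrically, so iterating \emph{ad infinitum} the successive exponents obey a contractive affine recursion whose fixed point produces $\tfrac{2}{3}+\tfrac{5\beta}{12}$; the diagonal contributions shed along the way are smaller and are absorbed into the two displayed terms.

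Finally I would bound the complete exponential sums thrown off by the iteration. After each Poisson step these are sums in several variables modulo (essentially squarefree) $q$ with Laurent-polynomial phases --- a tamer cousin of those in~\cite{AggarwalLeungMunshi2022} --- which I would estimate via Adolphson--Sperber~\cite{AdolphsonSperber1993}: when the Newton polytope of the phase is nondegenerate one gets square-root cancellation in each variable, while the finitely many degenerate configurations are negligible by the trivial bound. Feeding these estimates back through the iteration and optimising over $Q$, $q_{0}$, the number of iterations, and the dual ranges yields Theorem~\ref{main-2}. The hardest part will be the combinatorial bookkeeping of the iteration together with verifying the Adolphson--Sperber nondegeneracy hypothesis uniformly at every stage; a secondary worry is keeping the cube-root oscillation cheap enough that the sacrificed savings never swamp the gain from iterating.
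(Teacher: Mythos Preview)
Your outline tracks the paper through the first Cauchy--Schwarz and Poisson, but the iteration you describe is not the one that actually works, and the key missing ingredient is \emph{additive reciprocity}. After you Cauchy--Schwarz in $\tilde n$ and Poisson in $\tilde n$ once, the Fourier coefficients $\lambda_{\pi}$ are gone and $\tilde n$ has been traded for a frequency variable; there is nothing left to iterate in that direction. What the paper iterates is Cauchy--Schwarz and Poisson in the \emph{moduli} $q$ coming from the circle method. The first Poisson in $\tilde n$ (paper's $m$) leaves you with a product of two Kloosterman sums $S(\gamma,\overline{n_{1}};q_{1})S(\gamma,\overline{n_{2}};q_{2})$ summed over $\gamma$, which evaluates to an exponential $e(\overline{mn_{1}}q_{2}/q_{1}+\overline{mn_{2}}q_{1}/q_{2})$. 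Only after additive reciprocity---flipping to $e(\overline{q_{1}}q_{2}/(mn_{1})+q_{1}\overline{q_{2}}/(mn_{2}))$---do $q_{1},q_{2}$ move from modulus to numerator, so that Cauchy--Schwarz and Poisson in $q_{1}$ against a \emph{fixed} modulus $mn_{1}n_{2}$ is profitable and regenerates the same bilinear structure in $(q_{2},q_{3})$. That is the self-similarity driving the iteration; without reciprocity the $q$-sums have no usable modulus and your recursion never gets off the ground.

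Two further points. First, the cube-root oscillation $e(3(\tilde n N y)^{1/3}/q)$ is not killed by integration by parts; its amplitude is $\asymp N/C^{2}$, which need not be bounded when $C<\sqrt{N}$ (and the final optimisation takes $C=N^{(2-\beta)/3}<\sqrt{N}$ for $\beta>\tfrac{1}{2}$). The paper instead Mellin-inverts $e(x)$ to write it as a $t$-integral of $x^{it}$ with $|t|\ll (MN)^{1/3}/C$, which separates $\tilde n$ from $q$ at the cost of a factor $\sqrt{x}$---this is the ``sacrifice,'' and it is what produces the $\eta\in\{0,1\}$ dichotomy in the rigorous argument. Second, the $q_{0}q$ conductor-lowering you invoke is not used here; the paper's $\delta$-expansion has a single modulus $c$ (Lemma~\ref{DeltaCor} with $q=1$), and the phrase ``conductor lowering'' in the abstract refers to the Mellin step just described. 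So the broad toolkit you list is right, but the mechanism that makes the iteration converge to $\tfrac{2}{3}+\tfrac{5\beta}{12}$ is reciprocity followed by repeated Cauchy--Schwarz--Poisson in the $q$-variables, and that is absent from your plan.
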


\subsection{Heuristic Sketch}\label{heuristic-sketch}
This section overviews the method of the proof of Theorem~\ref{main} in a back-of-the-envelope fashion, presenting a high-level sketch geared to experts. This~section is not intended to entail rigour but might serve as a roadmap. In particular, we ignore some subtleties such as complicated smooth weights, coprimality conditions, and common divisors.

\subsubsection{Trivial Bound}
By Cauchy--Schwarz and the Rankin--Selberg bound (also known as the Ramanujan conjecture on average), the trivial bound amounts to
\begin{equation*}
\mathcal{A}_{\pi}(T, \alpha, \beta) \ll_{\pi} T
\end{equation*}
uniformly in $\alpha, \beta \in \R$. In what follows, we shall implicitly employ the Rankin--Selberg bound to visualise how much we ought to save from the trivial bound at each step.

\subsubsection{Smoothing}
Via a standard smoothing argument, the proof of Theorem \ref{main} reduces~to the proof of Theorem \ref{main-2}, because roughly
\begin{equation}\label{SketchSmoothing}
    \mathcal{A}_{\pi}(T, \alpha, \beta) = \mathcal{A}_{\pi}^{Y}(T, \alpha, \beta)+O\left(\frac{T^{\frac{19}{14}+\varepsilon}}{Y} \right).
\end{equation}

\subsubsection{Applying the $\delta$-Symbol}
The Duke--Friedlander--Iwaniec circle method now implies\footnote{Here and henceforth, we employ the symbol $\approx$ to indicate that the left-hand side may roughly be written as an expression resembling the right-hand side with an admissible error term. Furthermore, we replace the condition $n \leq T$ with $n \sim T$ for brevity, and this tentative simplification applies to other summations.}
\begin{align*}
\mathcal{A}_{\pi}^{Y}(T, \alpha, \beta) &\approx \sum_{m \sim T} \lambda_{\pi}(m) \sum_{n \sim T} e(\alpha n^{\beta})W\left(\frac{n}{T}\right) \delta(m = n)\\
&\approx \frac{1}{Q^2} \sum_{q \sim Q} \ \sideset{}{^{\ast}} \sum_{a \tpmod{q}} 
\sum_{m \sim T} \lambda_{\pi}(m) e \left(\frac{am}{q} \right) 
\sum_{n \sim T} e(\alpha n^{\beta}) e \left(-\frac{an}{q} \right)W\left(\frac{n}{T}\right)U\left(\frac{m-n}{qQ}\right)
\end{align*}
for any parameter $Q\geq1$ and some fixed function $U\in C_c^\infty([-1,1])$. The trivial bound at~this stage is $O_{\pi}(T^{2})$, and we need to save $T$ and a little more.

\subsubsection{Vorono\u{\i} Summation in $m$}
As an initial manoeuvre, we use $\GL_{3}$ Vorono\u{\i} summation in the sum over $m$. A sophisticated analysis of the integral transform on the dual side yields
\begin{equation*}
\sum_{m \sim T} \lambda_{\pi}(m) e \left(\frac{am}{q} \right) U\left(\frac{m-n}{qQ}\right)
\approx \frac{Q^2}{T}\sum_{m \sim \frac{T^2}{Q^3}} \lambda_{\pi}(m) S(\overline{a}, m; q)e\left(\frac{3(mn)^{\frac{1}{3}}}{q}\right),
\end{equation*}
where $S(a, b; c)$ denotes the standard Kloosterman sum.

\subsubsection{Poisson Summation in $n$}
We apply Poisson summation to the sum over $n$. Choosing $Q\gg T^{\frac{1-\beta}{2}+\varepsilon}$ yields something like
\begin{align*}
\sum_{n \sim T} e\left(-\frac{an}{q} +\alpha n^{\beta}+\frac{3(mn)^{\frac{1}{3}}}{q}\right) W\left(\frac{n}{T}\right) \approx T \sum_{\substack{n \sim \frac{Q}{T^{1-\beta}} \\ n \equiv a \tpmod{q}}} \mathcal{I}(m,n,q),
\end{align*}
where \begin{align*}
    \mathcal{I}(m,n,q) \coloneqq \int_0^\infty W(y)e\left(\alpha(Ty)^\beta+\frac{3(mTy)^{\frac{1}{3}}}{q}-\frac{nTy}{q}\right)dy.
\end{align*}
Hence, there holds
\begin{equation*}
\mathcal{A}_{\pi}^{Y}(T, \alpha, \beta) \approx  \sum_{m \sim \frac{T^2}{Q^3}} 
\sum_{n \sim \frac{Q}{T^{1-\beta}}} \sum_{q \sim Q} \lambda_{\pi}(m) 
S(m, \overline{n}; q) \mathcal{I}(m,n,q).
\end{equation*}
A stationary phase analysis yields $\mathcal{I}(m,n,q) \ll_{\epsilon} T^{-\frac{\beta}{2}+\varepsilon}$, thus the trivial bound at this stage is $O_{\pi, \alpha, \beta}(Q^{-\frac{1}{2}} T^{1+\frac{\beta}{2}})$.

\subsubsection{Rewriting the Integral}
It is now convenient to rewrite the oscillatory factor $e(\frac{3(mTy)^{\frac{1}{3}}}{q})$ via Mellin inversion. In fact, one approximates
\begin{equation*}
    e(x) \approx \frac{1}{\sqrt{x}} \int_{t\sim x}f(t)x^{it} dt
\end{equation*}
for some appropriate function $f\ll 1$. This allows one to separate $m$ from $q$ in the ensuing Cauchy--Schwarz step at the sacrifice of losing a square-root saving from the above integral.

\subsubsection{Cauchy--Schwarz and Lengthening in $m$}
Cauchy--Schwarz implies
\begin{equation}\label{eq:Cauchy-Schwarz}
\mathcal{A}_{\pi}^{Y}(T, \alpha, \beta) \ll_{\pi, \alpha, \beta} \frac{T^{\frac{3}{2}}}{Q^{\frac{5}{2}}} \sup_{t} \left(\sum_{m \sim \frac{T^2}{Q^3}} 
\left|\sum_{n \sim \frac{Q}{T^{1-\beta}}} \sum_{q \sim Q} S(m, \overline{n}; q) 
\mathcal{J}(n,q) \right|^{2} \right)^{\frac{1}{2}},
\end{equation}
where \begin{align*}
    \mathcal{J}(n,q) \coloneqq q^{-it}\int_0^\infty W(y)e\left(\alpha(Ty)^\beta-\frac{nTy}{q}\right)dy \ll T^{-\beta/2+\varepsilon}.
\end{align*}
The square-root cancellation heuristic now demonstrates that the best possible bound for~the expression~\eqref{eq:Cauchy-Schwarz} is $O_{\pi, \alpha, \beta, \epsilon}(Q^{-\frac{5}{2}} T^{2+\epsilon})$. The expression inside the square root in~\eqref{eq:Cauchy-Schwarz} becomes
\begin{equation*}
\sum_{m \sim \frac{T^2}{Q^3}} \mathop{\sum\sum}_{n_{1},n_2 \sim \frac{Q}{T^{1-\beta}}}  
\mathop{\sum\sum}_{q_{1}, q_2 \sim Q} S(m, \overline{n_{1}}; q_{1}) S(m, \overline{n_{2}}; q_{2}) 
\mathcal{J}(n_1,q_1)\overline{\mathcal{J}(n_2,q_2)}.
\end{equation*}
Rigorously, we introduce an lengthening parameter $L \geq 1$ and make the sum over $m$ of~length $\frac{LT^{2}}{Q^{3}}$ whose purpose is to rebalance the contributions of zero frequency and nonzero frequencies in Poisson summation in some non-generic cases. Since we choose $L \asymp 1$ in the generic case, we here refrain from discussing this step in detail.

\subsubsection{Poisson Summation in $m$}
We are in a position to execute Poisson summation in~$m$, obtaining something like
\begin{multline}\label{eq:Poisson-m-rough}
\sum_{m \sim \frac{T^2}{Q^3}} S(m, \overline{n_{1}}; q_{1}) S(m, \overline{n_{2}}; q_{2}) 
\approx \frac{T^2}{Q^2} \cdot \delta(n_{1} = n_{2}) \delta(q_{1} = q_{2})\\
 + \frac{T^2}{Q^5} \sum_{m \sim \frac{Q^5}{T^2}} \sum_{\gamma \tpmod{q_{1} q_{2}}} 
S(\gamma, \overline{n_{1}}; q_{1}) S(\gamma, \overline{n_{2}}; q_{2}) e \left(\frac{\gamma m}{q_{1} q_{2}} \right).
\end{multline}

\subsubsection{Additive Reciprocity}
By twisted multiplicativity for Kloosterman sums and additive reciprocity, the sum over $\gamma$ in~\eqref{eq:Poisson-m-rough} transforms into (ignoring an archimedean exponential)
\begin{equation*}
q_{1} q_{2} \cdot e \left(\frac{\overline{mn_{1}} q_{2}}{q_{1}}+\frac{\overline{mn_{2}} q_{1}}{q_{2}} \right) 
\approx q_{1} q_{2} \cdot e \left(\frac{\overline{q_{1}} q_{2}}{mn_{1}}+\frac{q_{1} \overline{q_{2}}}{mn_{2}} \right).
\end{equation*}
Hence, there holds
\begin{equation}\label{SketchBound1}
\mathcal{A}_{\pi}^{Y}(T, \alpha, \beta) \ll_{\pi, \alpha, \beta, \epsilon} Q^{-\frac{5}{2}} T^{2+\epsilon}+\Omega,
\end{equation}
where the contribution of nonzero frequencies is defined by
\begin{equation*}
\Omega \coloneqq  \frac{T^{\frac{5}{2}}}{Q^4} \left(\sum_{m \sim \frac{Q^5}{T^2}} 
\mathop{\sum\sum}_{n_{1}, n_2 \sim \frac{Q}{T^{1-\beta}}}  \mathop{\sum\sum}_{q_{1},q_2 \sim Q} e \left(\frac{\overline{q_{1}} q_{2}}{mn_{1}}+\frac{q_{1} \overline{q_{2}}}{mn_{2}} \right)\mathcal{J}(n_1,q_1)\overline{\mathcal{J}(n_2,q_2)} \right)^{\frac{1}{2}}.
\end{equation*}
The trivial bound for $\Omega$ is $O(\sqrt{Q}T^{\frac{1}{2}+\frac{\beta}{2}})$.

\subsubsection{Cauchy--Schwarz and Poisson Summation in $q_{1}$}
We apply Cauchy--Schwarz to~take out everything except the $q_2$-sum, and introduce a lengthening parameter $R=1+Q^6T^{-4+2\beta}$ to the $q_1$-sum. Upon applying Poisson summation in $q_{1}$, the lengthening forces the dual side to degenerate to frequency zero. While this step does not theoretically affect the final bound, it substantially facilitates the rest of the proof. To summarise, we are led to the expression
\begin{multline*}
    \Omega\ll \frac{T^{\frac{5}{2}}}{Q^4} \left(\frac{RQ^4}{T^{2-\beta}}\right)^{\frac{1}{4}} \Bigg(\sum_{m \sim \frac{Q^5}{T^2}} 
\mathop{\sum\sum}_{n_{1}, n_2 \sim \frac{Q}{T^{1-\beta}}}  \mathop{\sum\sum}_{\substack{q_2,q_3 \sim Q\\ q_2\equiv q_3\tpmod{n_1n_2}}}\\
\times S(\overline{q_2}-\overline{q_3},\overline{n_1n_2}(q_2-q_3);m) \mathcal{J}(n_2,q_2)\overline{\mathcal{J}(n_2,q_3)}\Bigg)^{\frac{1}{4}}.
\end{multline*}
The trivial bound for $\Omega$ at this stage is $O(Q^{-\frac{5}{8}} R^{\frac{1}{4}} T^{\frac{5}{4}})$.

\subsubsection{Iteration Ad Infinitum}
One observes that the dual side preserves the structure
\begin{align*}
    \mathop{\sum\sum}_{q_2,q_3\sim Q} \mathcal{C}(q_2,q_3)
\end{align*}
for a more intricate character sum $\mathcal{C}(q_2,q_3)$ defined modulo $mn_1n_2$. This enables the iteration of the above procedure. Iterating $k$ times for sufficiently large $k$, the total saving amounts~to
\begin{align*}
    \left(\frac{Q^{\frac{5}{2}}}{RT}\right)^{\frac{1}{8}} \times \left(\frac{Q^{\frac{5}{2}}}{RT}\right)^{\frac{1}{16}} \times\cdots \times \left(\frac{Q^{\frac{5}{2}}}{RT}\right)^{\frac{1}{2^{k}}}\approx \left(\frac{Q^{\frac{5}{2}}}{RT}\right)^{\frac{1}{4}} T^{-\varepsilon}.
\end{align*}
This implies
\begin{align}\label{OmegaBound}
    \Omega\ll \frac{R^{\frac{1}{4}}T^{\frac{5}{4}+\varepsilon}}{Q^{\frac{5}{8}}}\left(\frac{RT}{Q^{\frac{5}{2}}}\right)^{\frac{1}{4}} \ll \frac{T^{\frac{3}{2}+\varepsilon}}{Q^{\frac{5}{4}}}\left(1+\frac{Q^3}{T^{2-\beta}}\right).
\end{align}
Inserting \eqref{OmegaBound} into \eqref{SketchBound1} along with the optimisation $Q=\min\{T^{\frac{2-\beta}{3}},\sqrt{T}\}$ justifies the~claim of Theorem~\ref{main-2}. Theorem~\ref{main} now follows by choosing $Y=T^{\beta-\varepsilon}$ in \eqref{SketchSmoothing}.

\subsection*{Acknowledgements}
The authors thank Keshav Aggarwal for helpful discussions that led to this paper being written.

\section{Conventions}
Throughout, we use the shorthand $e(x) = e^{2\pi ix}$. We make use of the letter $\epsilon > 0$ to denote an arbitrarily small positive quantity that is possibly different in each instance. The notation $f \ll_{\nu} g$ or $f = O_{\nu}(g)$ indicates that there exists an effectively computable constant $c_{\nu} > 0$, depending at most on $\nu$, such that $|f(z)| \leq c_{\nu} |g(z)|$ for all $z$ in a range that is apparent~from context. If no parameter $\nu$ is present, then $c$ is absolute. If $a$ and $b$ are either both positive or both negative, then the notation $a \sim b$ means $k_{1} < \frac{a}{b} < k_{2}$ for some absolute and effectively computable constants $k_{1}, k_{2} > 0$. If they depend on $\nu$, then we attach a subscript and write $a \sim_{\nu} b$. The Kronecker symbol $\delta(\mathrm{S})$ detects $1$ if the statement $\mathrm{S}$ is true, and $0$ otherwise. For any positive integer $n$ with its prime factorisation $n=\prod_{i=1}^kp_i^{a_i}$, we write $n_\square=\prod_{\substack{1\leq i\leq k\\ a_i\geq2}}p_i^{a_i}$.

\section{Applications}\label{applications}
This section describes an application of Theorem~\ref{main} to moments of $\GL_{3}$ Hardy's function.

\subsection{Classical Triumphs}\label{classical-triumphs}
Hardy's function plays a crucial r\^{o}le in the study of the zeros~of the Riemann zeta function $\zeta(s)$ on the critical line $\Re(s) = \frac{1}{2}$. Given $t \in \R$, it is defined by
\begin{equation*}
Z(t) \coloneqq \zeta \left(\frac{1}{2}+it \right) \chi \left(\frac{1}{2}+it \right)^{-\frac{1}{2}}, \qquad \zeta(s) = \chi(s) \zeta(1-s).
\end{equation*}
As Ivi\'{c} discusses in~\cite{Ivic2013}, Hardy's function $Z(t)$ is real-valued, even, and smooth. Moreover, $|Z(t)| = |\zeta(\frac{1}{2}+it)|$ implies that its zeros match the zeros of $\zeta(s)$ on the critical line. Hardy's original application of $Z(t)$ is to prove that $\zeta(s)$ has infinitely many nontrivial zeros; see the book of Titchmarsh~\cite{Titchmarsh1987}. The proof begins with an assumption that $Z(t)$ does not~change sign for sufficiently large $t > 0$ and then yields a contradiction. Hardy~\cite{Hardy1914} later refines~on his reasoning to show that $\zeta(s)$ has $\gg T$ zeros $\beta+i\gamma$ satisfying $|\gamma| \leq T$. Selberg~\cite{Selberg1989} then improves upon this estimate to $\gg T \log T$, followed by its further refinements by successors.

We are interested in the moments of $Z(t)$, but only odd moments represent a novelty since $|Z(t)| = |\zeta(\frac{1}{2}+it)|$. Given $k \in \N$ fixed and $T \geq 1$, consider the $k$-th moment of $Z(t)$, namely
\begin{equation*}
\mathcal{F}_{k}(T) \coloneqq \int_{0}^{T} Z(t)^{k} dt.
\end{equation*}
When $k = 1$, Korolev~\cite{Korolev2008} and Jutila~\cite{Jutila2009,Jutila2011} prove independently the true order~of the first moment
\begin{equation*}
\mathcal{F}_{1}(T) = O(T^{\frac{1}{4}}), \qquad \mathcal{F}_{1}(T) = \Omega_{\pm}(T^{\frac{1}{4}}),
\end{equation*}
Nonetheless, higher odd moments, where one also expects some nontrivial cancellations, still remain unamenable. In particular, the following problem has been unsolved since being posed as a question of study during the conference ``Elementary and Analytic Number Theory'' at the Mathematisches Forschungsinstitut Oberwolfach.
\begin{problem}[Ivi\'{c} 2003]\label{problem-1}
Does there exist a constant $0 < c < 1$ such that
\begin{equation}\label{eq:Ivic-problem}
\mathcal{F}_{3}(T) = O(T^{c})?
\end{equation}
\end{problem}

The approximate functional equation shows~\cite[Equation (7.5)]{Ivic2013} that
\begin{equation}\label{eq:cubic-moment-approximation}
\mathcal{F}_{3}(T) = 2\pi \sqrt{\frac{2}{3}} \sum_{(\frac{T}{2\pi})^{\frac{3}{2}} \leq n \leq (\frac{T}{\pi})^{\frac{3}{2}}} 
d_{3}(n) n^{-\frac{1}{6}} \cos \left(3\pi n^{\frac{2}{3}}+\frac{\pi}{8} \right)+O_{\epsilon}(T^{\frac{3}{4}+\epsilon}),
\end{equation}
where $d_{3}(n)$ is the $3$-fold divisor function generated by $\zeta(s)^{3}$. The problem of obtaining~\eqref{eq:Ivic-problem} for $\frac{3}{4} < c < 1$ boils down to the estimation of the right-hand side of~\eqref{eq:cubic-moment-approximation}. It is known that
\begin{equation}\label{eq:non-linear}
\sum_{n \leq T} d_{3}(n) e(\alpha n^{\frac{2}{3}}) \ll_{\epsilon} T^{\frac{5}{6}+\epsilon}
\end{equation}
uniformly in $\alpha \in \R$. Nevertheless, this corresponds to the exponent $c = 1+\epsilon$ in Problem~\ref{problem-1}, and is in fact weaker than the trivial bound because Cauchy--Schwarz gives
\begin{equation}\label{eq:cubic-moment-best-known}
\mathcal{F}_{3}(T) \ll \left(\int_{0}^{T} \left|\zeta \left(\frac{1}{2}+it \right) \right|^{2} dt \right)^{\frac{1}{2}} 
\left(\int_{0}^{T} \left|\zeta \left(\frac{1}{2}+it \right) \right|^{4} dt \right)^{\frac{1}{2}} \ll T(\log T)^{\frac{5}{2}},
\end{equation}
where we use the classical bounds due to Hardy--Littlewood and Ingham~\cite{Ingham1927}, namely
\begin{equation*}
\int_{0}^{T} \left|\zeta \left(\frac{1}{2}+it \right) \right|^{2} dt \ll T \log T, \qquad 
\int_{0}^{T} \left|\zeta \left(\frac{1}{2}+it \right) \right|^{4} dt \ll T(\log T)^{4}.
\end{equation*}
For the cubic moment of Hardy's function, there seems to exist no better estimate than
~\eqref{eq:cubic-moment-best-known}, and any improvement thereof would necessitate a delicate analysis of the additive twist~\eqref{eq:non-linear}. The difficulty in the estimation of~\eqref{eq:non-linear} lies in the presence of $d_{3}(n)$, which, despite its simple appearance, is quite difficult to handle. In fact, a variant of Vorono\u{\i} summation is essentially involutory and is of no great use in this context. Moreover, writing $n = k \ell m$ transforms~\eqref{eq:non-linear} into a three-dimensional exponential sum over the variables $k, \ell, m \in \N$, but the theory of exponent pairs is not capable of yielding any nontrivial bounds. We also refer the reader to the pioneering papers of Ivi\'{c}~\cite{Ivic2010,Ivic2013,Ivic2017,Ivic2017-3,Ivic2017-2}.

\subsection{$\GL_{3}$ Hardy's Function}
Given $t \in \R$, $\GL_{3}$ Hardy's function is defined by
\begin{equation*}
Z(t, \pi) \coloneqq L \left(\frac{1}{2}+it, \pi \right) \chi \left(\frac{1}{2}+it, \pi \right)^{-\frac{1}{2}}, \qquad 
L(s, \pi) = \chi(s, \pi) L(1-s, \tilde{\pi}).
\end{equation*}
We then define the first moment of $\GL_{3}$ Hardy's function by
\begin{equation*}
\mathcal{F}_{1}(T, \pi) \coloneqq \int_{0}^{T} Z(t, \pi) dt.
\end{equation*}
The key difference from the cubic moment of Hardy's function for the Riemann zeta function is that general $\GL_{3}$ $L$-functions do not factorise into lower degree $L$-functions. This precludes us from using Cauchy--Schwarz as in~\eqref{eq:cubic-moment-best-known}. Instead, if one assumes the second moment bound
\begin{equation*}
\int_{0}^{T} \left|L \left(\frac{1}{2}+it, \pi \right) \right|^{2} dt \ll_{\pi, \epsilon} T^{\frac{3}{2}-\delta+\epsilon}
\end{equation*}
for some $\delta > 0$, then
\begin{equation}\label{eq:delta}
\mathcal{F}_{1}(T, \pi) \ll_{\pi, \epsilon} T^{\frac{5}{4}-\frac{\delta}{2}+\epsilon}.
\end{equation}
Conforming to~\cite[Chapter~7]{Ivic2013} \emph{mutatis mutandis}, one can establish the following reduction.
\begin{theorem}\label{conditional}
Let $\pi \in \mathfrak{F}_{3}$. Assume that
\begin{equation*}
\sum_{n \leq T} \lambda_{\pi}(n) e(\alpha n^{\frac{2}{3}}) \ll_{\pi, \alpha, \epsilon} T^{1-\eta+\epsilon}
\end{equation*}
for some $\eta > 0$. Then we have for any $\epsilon > 0$ that
\begin{equation}\label{eq:eta}
\mathcal{F}_{1}(T, \pi) \ll_{\pi, \epsilon} T^{\frac{5}{4}-\frac{3\eta}{2}+\epsilon}.
\end{equation}
\end{theorem}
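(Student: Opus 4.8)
The plan is to follow the classical derivation of an approximate functional equation for $Z(t,\pi)$ along the lines of \cite[Chapter~7]{Ivic2013}, and then to trade the pointwise oscillation in $t$ for the additive-twist hypothesis via partial summation. First I would write, by the approximate functional equation for $L(\tfrac12+it,\pi)$ of degree three, something of the shape
\begin{equation*}
Z(t,\pi) = 2\,\Re\!\!\sum_{n \leq X(t)} \frac{\lambda_{\pi}(n)}{n^{1/2}}\, e\!\left(-\frac{t}{2\pi}\log\frac{n}{(t/2\pi)^{3/2}} - c\, t\right) W\!\left(\frac{n}{X(t)}\right) + O_{\pi,\epsilon}(t^{-A}),
\end{equation*}
where $X(t) \asymp (t/2\pi)^{3/2}$ is the length dictated by the degree-$3$ conductor $t^{3}$, $W$ is a smooth cutoff, and $c$ is an explicit constant; the phase is obtained from the asymptotics of the gamma factors in $\chi(\tfrac12+it,\pi)$ via Stirling. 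Integrating over $t \in [0,T]$, reversing the order of summation and integration, and applying the saddle-point method to the $t$-integral for each fixed $n$, the stationary point occurs at $n \asymp (t/2\pi)^{3/2}$, i.e. $t \asymp n^{2/3}$; the stationary-phase evaluation produces a factor $n^{1/6}$ from the second derivative of the phase together with the oscillatory factor $e(\kappa\, n^{2/3})$ for an explicit $\kappa \ne 0$. Collecting these, one arrives at
\begin{equation*}
\mathcal{F}_{1}(T, \pi) = C \sum_{n \asymp (T/2\pi)^{3/2}} \lambda_{\pi}(n)\, n^{-1/6} e(\kappa\, n^{2/3}) + O_{\pi,\epsilon}(T^{3/4+\epsilon}),
\end{equation*}
the error term $T^{3/4+\epsilon}$ being the degree-$3$ analogue of the $T^{3/4+\epsilon}$ in \eqref{eq:cubic-moment-approximation} and coming from the tails and boundary terms of the saddle-point analysis (this is well below the target exponent $\tfrac54 - \tfrac{3\eta}{2}$ provided $\eta$ is not too large, and for large $\eta$ one can always fall back on the trivial bound).

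The main term is now a non-linear additive twist of $\lambda_{\pi}(n)$ with the weight $n^{-1/6}$ against the phase $e(\kappa\, n^{2/3})$, supported on $n$ of size roughly $N := (T/2\pi)^{3/2}$. I would remove the smooth weight $n^{-1/6}$ by partial summation: writing $S(u) := \sum_{n \leq u} \lambda_{\pi}(n) e(\kappa\, n^{2/3})$, the hypothesis gives $S(u) \ll_{\pi,\kappa,\epsilon} u^{1-\eta+\epsilon}$ uniformly for $u \leq N$, and then
\begin{equation*}
\sum_{n \asymp N} \lambda_{\pi}(n)\, n^{-1/6} e(\kappa\, n^{2/3}) \ll \sup_{u \asymp N} |S(u)| \cdot N^{-1/6} \ll_{\pi,\kappa,\epsilon} N^{5/6 - \eta + \epsilon} = \big((T/2\pi)^{3/2}\big)^{5/6-\eta+\epsilon} \ll_{\pi,\epsilon} T^{5/4 - \tfrac{3\eta}{2} + \epsilon}.
\end{equation*}
Combining this with the error term from the approximate functional equation yields \eqref{eq:eta}. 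Note that the exponent is consistent with the heuristic count: the trivial bound $S(u) \ll u^{1+\epsilon}$ recovers $\mathcal{F}_{1}(T,\pi) \ll T^{5/4+\epsilon}$, which matches the conditional bound \eqref{eq:delta} with $\delta = 0$, so the saving $\tfrac{3\eta}{2}$ in the exponent of $T$ corresponds exactly to the saving $\eta$ in the exponent of $N = T^{3/2}$.

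The step I expect to require the most care is the derivation of the approximate functional equation together with the saddle-point evaluation of the $t$-integral: one must control the dependence on $\pi$ in the gamma-factor asymptotics, verify that the contributions of $n$ far from the stationary point $n \asymp t^{3/2}$ (and the endpoint contributions at $t = 0$ and $t = T$) are genuinely $O_{\pi,\epsilon}(T^{3/4+\epsilon})$, and confirm that the secondary terms in the asymptotic expansion of the gamma factors contribute shorter sums that are handled identically. This is precisely the content of \cite[Chapter~7]{Ivic2013} carried out \emph{mutatis mutandis} for a degree-$3$ $L$-function in place of $\zeta^{3}$, and no new analytic difficulty arises because $L(s,\pi)$ has a single Euler product of degree three with a standard functional equation; the only genuinely arithmetic input needed beyond this is the hypothesized cancellation in the additive twist, which enters only at the final partial-summation step.
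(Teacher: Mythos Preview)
Your proposal is correct and takes essentially the same approach as the paper, which does not spell out a proof but simply states that Theorem~\ref{conditional} follows by conforming to~\cite[Chapter~7]{Ivic2013} \emph{mutatis mutandis}. You have accurately reconstructed what that entails: the degree-$3$ approximate functional equation and stationary-phase evaluation of the $t$-integral produce the analogue of~\eqref{eq:cubic-moment-approximation} with $\lambda_{\pi}(n)$ in place of $d_{3}(n)$, and partial summation against the weight $n^{-1/6}$ over $n \asymp T^{3/2}$ converts the hypothesised saving $\eta$ into the exponent $\tfrac{5}{4}-\tfrac{3\eta}{2}$.
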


Substituting $\eta = \frac{1}{18}$ from Corollary~\ref{corollary} into~\eqref{eq:eta} yields a refined exponent $\frac{7}{6} = 1.16666 \cdots$.
\begin{corollary}\label{cor:first-moment}
Let $\pi \in \mathfrak{F}_{3}$. Then we have for any $\epsilon > 0$ that
\begin{equation*}
\mathcal{F}_{1}(T, \pi) \ll_{\pi, \epsilon} T^{\frac{7}{6}+\epsilon}.
\end{equation*}
\end{corollary}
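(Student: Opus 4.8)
The plan is to obtain Corollary~\ref{cor:first-moment} by feeding the unconditional bound of Corollary~\ref{corollary} into the conditional reduction recorded in Theorem~\ref{conditional}. Corollary~\ref{corollary} gives, uniformly in $\alpha \in \R$,
\[
\sum_{n \leq T} \lambda_{\pi}(n) e(\alpha n^{\frac{2}{3}}) \ll_{\pi, \alpha, \epsilon} T^{\frac{17}{18}+\epsilon} = T^{1-\frac{1}{18}+\epsilon},
\]
which is exactly the hypothesis of Theorem~\ref{conditional} with the admissible value $\eta = \frac{1}{18}$; the $\epsilon$'s match, so equality in the exponent is harmless. Inserting $\eta = \frac{1}{18}$ into~\eqref{eq:eta} then yields
\[
\mathcal{F}_{1}(T, \pi) \ll_{\pi, \epsilon} T^{\frac{5}{4}-\frac{3}{2}\cdot\frac{1}{18}+\epsilon} = T^{\frac{5}{4}-\frac{1}{12}+\epsilon} = T^{\frac{15}{12}-\frac{1}{12}+\epsilon} = T^{\frac{7}{6}+\epsilon},
\]
which is the asserted bound. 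Thus the entire content of the corollary is this one substitution together with the arithmetic $\frac{5}{4}-\frac{1}{12}=\frac{7}{6}$.

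For completeness I would recall (but need not reprove, as it is supplied by Theorem~\ref{conditional}) the chain underlying the reduction, which runs parallel to~\cite[Chapter~7]{Ivic2013} and to~\eqref{eq:cubic-moment-approximation}: the approximate functional equation for $L(\tfrac12+it,\pi)$ expresses $Z(t,\pi)$ as a Dirichlet sum of effective length $\asymp t^{3/2}$ in the Hecke eigenvalues $\lambda_{\pi}(n)$, weighted by an oscillatory factor $\cos(c\, n^{2/3} + \cdots)$, up to an error of acceptable size. Integrating over $t \in [0,T]$, interchanging sum and integral, and evaluating the resulting $t$-integral by stationary phase localises each $n$ near the scale $T^{3/2}$ and turns the phase into a non-linear additive twist $e(\alpha n^{2/3})$; after a dyadic decomposition and partial summation one is left with $O(T^{\epsilon})$ copies of $\mathcal{A}_{\pi}(N,\alpha)$ with $N \ll T^{3/2}$ and $\alpha$ of the relevant size, and the power-saving hypothesis converts the gain $N^{-\eta}$ at scale $N\asymp T^{3/2}$ into the loss $T^{-3\eta/2}$ off the trivial bound $T^{5/4}$ for $\mathcal{F}_1(T,\pi)$ — this is the source of the factor $\tfrac32$ in the exponent.

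I do not expect any genuine obstacle at this stage: the substance is entirely upstream, in Corollary~\ref{corollary} (hence Theorem~\ref{main}), whose proof via the DFI circle method, $\GL_3$ Vorono\u{\i}, the iterated Cauchy--Schwarz/Poisson procedure, and the Adolphson--Sperber estimates is where all the difficulty lies. The only thing worth flagging is that the exponent $\frac{7}{6}$ is not expected to be final: improving it would require either enlarging the admissible $\eta$ beyond $\frac{1}{18}$ in Theorem~\ref{main}, or combining Theorem~\ref{conditional} with a non-trivial short-interval second-moment bound for $L(\tfrac12+it,\pi)$ as in~\eqref{eq:delta}.
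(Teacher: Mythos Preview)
Your proposal is correct and matches the paper's approach exactly: the corollary is obtained by substituting $\eta=\tfrac{1}{18}$ from Corollary~\ref{corollary} into the conditional bound~\eqref{eq:eta} of Theorem~\ref{conditional}, together with the arithmetic $\tfrac{5}{4}-\tfrac{1}{12}=\tfrac{7}{6}$. The additional explanatory paragraph on the mechanism behind Theorem~\ref{conditional} is accurate but not needed for the proof itself.
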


If $\eta > \frac{1}{6}$, then one can prove the infinitude of zeros of $\GL_{3}$ $L$-functions on the critical line; see the Ph.D. thesis of Rajkumar~\cite{Rajkumar2012}. It is commensurate with Weyl-strength subconvex bounds for $\GL_{3}$ $L$-functions (at least) in the $t$-aspect.

\section{Arithmetic Preliminaries}
This section reviews arithmetic tools that we shall adopt in the proof of Theorem~\ref{main}.

\subsection{Circle Method}\label{circle-method}
There are two oscillatory components in the non-linear additive twist that we address. The idea is to separate these oscillations via the circle method. One seeks for a Fourier expansion that matches the Kronecker symbol $\delta(n = 0)$.
\begin{lemma}[Leung~\cite{Leung2022}]\label{DeltaCor}
Let $n \in \Z$ be such that $|n| \ll N$, $q \in \N$, and let $C > N^{\epsilon}$. Let $U \in C_{c}^{\infty}(\R)$ and $W \in C_{c}^{\infty}([-2, -1] \cup [1, 2])$ be nonnegative even functions such that $U(x) = 1$ for $x \in [-2, 2]$. Then we have that
\begin{equation*}
\delta(n = 0) = \frac{1}{\mathcal{C}} \sum_{c = 1}^{\infty} \frac{1}{cq} 
\sum_{a \tpmod{cq}} e \left(\frac{an}{cq} \right) V_{0} \left(\frac{c}{C}, \frac{n}{cCq} \right),
\end{equation*}
where
\begin{equation*}
\mathcal{C} \coloneqq \sum_{c = 1}^{\infty} W \left(\frac{c}{C} \right) \sim C,
\end{equation*}
and
\begin{equation*}
V_{0}(x, y) \coloneqq W(x) U(x) U(y)-W(y) U(x) U(y)
\end{equation*}
is a smooth function localising to $\delta(|x|, |y| \ll 1)$.
\end{lemma}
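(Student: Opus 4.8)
The statement to prove is Lemma~\ref{DeltaCor}, a variant of the Duke--Friedlander--Iwaniec $\delta$-method attributed to Leung. Here is how I would approach it.

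\medskip

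\noindent\textbf{Plan of proof.} The starting point is the elementary identity that for $n \in \Z$,
\begin{equation*}
\delta(n=0) = \int_{0}^{1} e(n\theta)\, d\theta,
\end{equation*}
and the classical DFI observation that, for a suitable smooth bump profile, one may expand $\theta$ near rationals $a/(cq)$ with $c$ in a dyadic window around $C$. The cleanest route is to \emph{derive} the claimed formula from the standard DFI $\delta$-symbol rather than reprove the circle method from scratch: recall that for any $C\geq 1$ one has an expansion
\begin{equation*}
\delta(n=0) = \frac{1}{C}\sum_{c=1}^{\infty}\ \sum_{a\bmod c} e\!\left(\frac{an}{c}\right) h\!\left(\frac{c}{C},\frac{n}{C^{2}}\right),
\end{equation*}
where $h(x,y)$ is smooth, satisfies $h(x,y)\ll 1/x$ for $x\leq 1$ or $|y|\leq x$ and is negligibly small otherwise, and $h(x,0)=1$ near $x\asymp 1$. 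The idea of Leung's variant is to insert the fixed modulus $q$: replace $c$ by $cq$ in the summation so that the inner additive character runs modulo $cq$, which is exactly what is needed to interface with the Fourier coefficients $e(am/(cq))$ arising later. Concretely, I would first write $\delta(n=0)$ using a partition-of-unity argument tailored to moduli of the form $cq$: pick the nonnegative even weight $W$ supported on $[-2,-1]\cup[1,2]$, set $\mathcal{C} = \sum_{c\geq 1} W(c/C) \sim C$, and use the normalisation identity $\frac{1}{\mathcal{C}}\sum_{c} W(c/C) = 1$ to write
\begin{equation*}
\delta(n=0) = \frac{1}{\mathcal{C}}\sum_{c=1}^{\infty} W\!\left(\frac{c}{C}\right)\delta(n=0).
\end{equation*}

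\medskip

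\noindent\textbf{Key steps.} The substantive step is to re-expand each copy of $\delta(n=0)$ as a complete additive sum modulo $cq$ attached to an oscillatory weight. Using orthogonality of additive characters one has, for any $m\in\N$,
\begin{equation*}
\delta(n=0) = \frac{1}{m}\sum_{a\bmod m} e\!\left(\frac{an}{m}\right)\quad\text{whenever }|n|<m,
\end{equation*}
but since $|n|\ll N$ and $c$ can be as small as $C\asymp N^{\epsilon}$ this is not directly valid; the DFI mechanism repairs this by smoothing in the $c$-variable and inserting the analytic factor $U(n/(cCq))$ which is identically $1$ in the relevant range $|n|\ll cCq$ and truncates the tail. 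Thus the second input is the pair of cutoffs $U(c/C)$ and $U(n/(cCq))$ — the hypothesis $U\equiv 1$ on $[-2,2]$ guarantees that on the support of $W(c/C)$ (where $c/C\in[1,2]$) the factor $U(c/C)=1$, and on the genuine diagonal $n=0$ the factor $U(0)=1$ as well, so the weight $V_{0}(c/C, n/(cCq)) = W(c/C)U(c/C)U(n/(cCq)) - W(n/(cCq))U(c/C)U(n/(cCq))$ reduces to $W(c/C)$ when $n=0$. Summing the first term over $c$ against $\frac{1}{cq}\sum_{a\bmod cq}e(an/(cq))$ then reproduces $\frac{1}{\mathcal{C}}\sum_c W(c/C)\delta(n=0) = \delta(n=0)$; the role of the \emph{second} term $-W(n/(cCq))U(c/C)U(n/(cCq))$ is to cancel the spurious contribution of the non-diagonal $n\neq 0$ terms, exactly as in the symmetric DFI construction — this is where $|n|\ll N$ and $C>N^{\epsilon}$ enter, forcing $n/(cCq)$ to lie outside the support of $W$ unless $n=0$, so the subtracted term vanishes on the diagonal but supplies the needed oscillation off it. Finally I would verify the smoothness and localisation claim for $V_{0}$: it is a fixed smooth function of two variables, even in each, supported where $|x|\asymp 1$ or $|y|\asymp 1$, hence effectively localising to $|x|,|y|\ll 1$ with all derivatives $O(1)$; derivative bounds follow from those of $W$ and $U$ by the product rule, uniformly in $C$ and $q$.

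\medskip

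\noindent\textbf{Main obstacle.} The delicate point is not any single estimate but the bookkeeping that shows the two pieces of $V_{0}$ combine to give \emph{exactly} $\delta(n=0)$ with no error term — i.e.\ that this is an identity, not an approximation. One must check that for $n\neq 0$ with $|n|\ll N$ the full sum $\frac{1}{\mathcal{C}}\sum_{c}\frac{1}{cq}\sum_{a\bmod cq} e(an/(cq)) V_{0}(c/C, n/(cCq))$ telescopes to zero: the $W(c/C)U(c/C)U(\cdot)$ part contributes $\frac{1}{\mathcal{C}}\sum_c W(c/C)\delta(cq\mid n) = 0$ since $cq>|n|$ on the support, while the $-W(n/(cCq))$ part is supported on $|n|\asymp cCq\gg N^{1+\epsilon}$, which is empty for $|n|\ll N$ once $C>N^{\epsilon}$. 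Hence both pieces vanish off the diagonal, and on the diagonal only the first survives and equals $1$. Making this argument airtight — keeping track of the even-function hypotheses so that the $c$ and $n$ ranges are handled symmetrically, and confirming the normalisation constant $\mathcal{C}\sim C$ — is the crux; everything else is a routine adaptation of~\cite{Leung2022} and the classical DFI circle method.
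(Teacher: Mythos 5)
Your high-level picture is right (normalise by $\mathcal C$, detect the diagonal, show the off-diagonal contribution vanishes), but the crucial step --- the vanishing of the sum when $n\neq 0$ --- is argued incorrectly. You claim that \emph{each} of the two pieces of $V_{0}$ separately contributes zero off the diagonal, on the grounds that on the support of the first piece $cq>|n|$ forces $\delta(cq\mid n)=0$, and that the second piece is supported on $|n|\asymp cCq\gg N^{1+\epsilon}$. Neither of these is true in general: the first needs $Cq\gg N$, but we only have $C>N^{\epsilon}$ and $q$ is an arbitrary positive integer (indeed the paper invokes the lemma with $q=1$ and $C\leq\sqrt N$, where $Cq\ll\sqrt N\ll N$); the second needs $c$ to be of size $C$, but the only $c$-localisation in the second piece comes from $U(c/C)$, so $c$ can be as small as $1$ and $cCq$ as small as $Cq$, which is far below $N$. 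So off the diagonal, the two pieces generically do \emph{not} vanish individually, and your account gives no mechanism for their cancellation.

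The correct mechanism is the standard DFI involution, adapted to the modulus $cq$: after orthogonality, the right-hand side equals $\frac{1}{\mathcal C}\sum_{cq\mid n}V_{0}\bigl(\frac{c}{C},\frac{n}{cCq}\bigr)$. For $n\neq 0$ this is empty unless $q\mid n$; writing $n=q\tilde n$ (WLOG $n>0$, using evenness for $n<0$), the sum becomes $\frac{1}{\mathcal C}\sum_{c\mid\tilde n}V_{0}\bigl(\frac{c}{C},\frac{\tilde n/c}{C}\bigr)$. The divisor involution $c\leftrightarrow\tilde n/c$ swaps the two arguments of $V_{0}$, and since
\begin{equation*}
V_{0}(x,y)=U(x)U(y)\bigl(W(x)-W(y)\bigr)=-V_{0}(y,x),
\end{equation*}
the sum is antisymmetric under this involution and hence vanishes (fixed points $c=\tilde n/c$ contribute $V_{0}(x,x)=0$). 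On the diagonal $n=0$ every $c$ survives, $W(0)=0$ kills the subtracted piece, $U(0)=1$ and $U\equiv 1$ on $\operatorname{supp}W$ make the first piece equal to $W(c/C)$, and the normalisation by $\mathcal C$ gives exactly $1$. The hypotheses $|n|\ll N$ and $C>N^{\epsilon}$ are not what drives the identity; they ensure $\mathcal C\asymp C>0$ and are used downstream to truncate the $c$-sum after Lemma~\ref{DeltaCor} is invoked.
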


In view of the result of Heath-Brown~\cite{HeathBrown1996}, Lemma~\ref{DeltaCor} is the Duke--Friedlander--Iwaniec circle method~\cite{DukeFriedlanderIwaniec1994} with a simpler weight function that restricts $|n| \ll cCQ$. This feature facilitates the forthcoming integral analysis once we make use of the $\mathrm{GL}_{3}$ Vorono\u{\i} summation formula (Lemma~\ref{gl3voronoi}). We direct the reader to~\cite{KanekoLeung2023,Leung2022} for further discussions.

\subsection{Poisson Summation}\label{Poisson-summation}
For $n \in \mathbb{N}$ and an integrable function $w \colon \mathbb{R}^{n} \to \mathbb{C}$, denote its Fourier transform by
\begin{equation*}
\widehat{w}(y) \coloneqq \int_{\R^{n}} w(x) e(-\langle x, y \rangle) dx,
\end{equation*}
where $\langle \cdot, \cdot \rangle$ denotes the standard inner product on $\mathbb{R}^{n}$. Furthermore, if $c \in \N$ and $K \colon \mathbb{Z} \to \mathbb{C}$ is a periodic function of period $c$, then its Fourier transform $\widehat{K}$ is again the periodic function of period $c$, defined on $\Z$ by
\begin{equation*}
\widehat{K}(n) \coloneqq \ \sum_{a \tpmod{c}} K(a) e \left(-\frac{an}{c} \right).
\end{equation*}
There is a minor inconsistency in sign choices, and $\widehat{\widehat{K}}(n) = K(-n)$ for all $n \in \Z$.
\begin{lemma}[{Fouvry--Kowalski--Michel~\cite[Lemma~2.1]{FouvryKowalskiMichel2015}}]\label{Fouvry-Kowalski-Michel}
For any $c \in \mathbb{N}$, any $c$-periodic function $K$, and any even smooth function $V$ compactly supported on $\mathbb{R}$, we have that
\begin{equation*}
\sum_{n = 1}^{\infty} K(n) V(n) = \frac{1}{c} \sum_{n \in \mathbb{Z}} \widehat{K}(n) \widehat{V} \left(\frac{n}{c} \right).
\end{equation*}
\end{lemma}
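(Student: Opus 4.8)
The plan is to obtain the identity directly from the one-dimensional Poisson summation formula, applied one residue class at a time; this is precisely the argument of \cite[Lemma~2.1]{FouvryKowalskiMichel2015}. First I would sort the integers by their residue modulo $c$: writing $n=a+cm$ with $a$ ranging over a complete residue system modulo $c$ and $m$ over $\Z$, and using that $K$ is $c$-periodic, the sum factors as $\sum_{a \bmod c} K(a) \sum_{m \in \Z} V(a+cm)$.

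Next I would apply classical Poisson summation to each inner sum. For fixed $a$ the function $x \mapsto V(a+cx)$ is smooth and compactly supported, hence Schwartz, so $\sum_{m\in\Z}V(a+cm) = \sum_{h\in\Z}\int_{\R}V(a+cx)e(-hx)\,dx$, and the substitution $u=a+cx$ rewrites the $h$-th integral as $\tfrac1c\,e(ah/c)\,\widehat V(h/c)$. Substituting this back and interchanging the finite $a$-sum with the absolutely convergent $h$-sum (using the rapid decay of $\widehat V$) gives
\[
\sum_{n\in\Z} K(n) V(n) = \frac1c \sum_{h\in\Z}\widehat V\!\left(\frac hc\right) \sum_{a \bmod c} K(a)\, e\!\left(\frac{ah}{c}\right) = \frac1c \sum_{h\in\Z}\widehat V\!\left(\frac hc\right)\widehat K(-h).
\]
Re-indexing $h\mapsto -h$ and using that $V$, and hence $\widehat V$, is even converts the right-hand side into $\tfrac1c\sum_{n\in\Z}\widehat K(n)\widehat V(n/c)$, which is the asserted identity; the consistency relation $\widehat{\widehat K}(n)=K(-n)$ drops out of the same manipulation with the roles of $V$ and $K$ exchanged and the additive character conjugated.

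There is no genuine analytic obstacle here: the entire content is classical Poisson summation. The only point deserving a moment's care is the bookkeeping — the dilation factor $c^{-1}$, the sign inside the additive character, the appeal to the parity of $V$, and the matching of the ranges of summation on the two sides, which is afforded by the even, compactly supported weight $V$.
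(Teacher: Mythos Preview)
The paper does not supply its own proof of this lemma; it is simply quoted from \cite[Lemma~2.1]{FouvryKowalskiMichel2015}. Your argument --- sorting into residue classes modulo $c$ and applying classical Poisson summation to each --- is exactly the standard proof (and the one in the cited reference), and the manipulations are correct.

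One minor bookkeeping point: your computation establishes the identity for $\sum_{n\in\Z}K(n)V(n)$, whereas the statement as printed has $\sum_{n=1}^{\infty}$ on the left. The evenness of $V$ alone does not convert one into the other, since that would also require $K(-n)=K(n)$; your closing remark that the matching of ranges ``is afforded by the even, compactly supported weight $V$'' is therefore not quite right. In every application in the paper the weight is supported on the positive reals, so the discrepancy is harmless, but it is a slip in the stated hypotheses rather than something the evenness of $V$ fixes.
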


\subsection{The Gamma Function}\label{the-Gamma-function}
For fixed $\sigma \in \R$, real $|\tau| \geq 3$, and any $M > 0$, we often~make use of Stirling's formula
\begin{equation}\label{eq:Stirling}
\Gamma(\sigma+i\tau) = e^{-\frac{\pi|\tau|}{2}} |\tau|^{\sigma-\frac{1}{2}} \exp \left(i\tau \log \frac{|\tau|}{e} \right) g_{\sigma, M}(\tau)+O_{\sigma, M}(|\tau|^{-M}),
\end{equation}
where
\begin{equation*}
g_{\sigma, M}(\tau) = \sqrt{2\pi} \exp \left(\frac{\pi}{4}(2\sigma-1)i \operatorname{sgn}(\tau) \right)+O_{\sigma, M}(|\tau|^{-1})
\end{equation*}
and
\begin{equation*}
|\tau|^{j} g_{\sigma, M}^{(j)}(\tau) \ll_{j, \sigma, M} 1
\end{equation*}
for all fixed $j \in \N_{0}$.

\section{Automorphic Preliminaries}\label{automorphic-preliminaries}
This section compiles automorphic tools that we shall adopt in the proof of Theorem~\ref{main}; see the book of Goldfeld~\cite{Goldfeld2006} for a thorough account of the theoretical background.

\subsection{Hecke Eigenvalues}\label{automorphic-l-functions}
Denote by $\pi$ a Hecke--Maa{\ss} cusp form of type $(\nu_{1}, \nu_{2})$~for $\mathrm{SL}_{3}(\Z)$ with the Fourier--Whittaker coefficients $\lambda_{\pi}(m, n)$ normalised so that $\lambda_{\pi}(1, 1) = 1$. As usual, one abbreviates $\lambda_{\pi}(1, n) \coloneqq \lambda_{\pi}(n)$. The corresponding Langlands parameters are given by
\begin{equation*}
\mu_{1} = -\nu_{1}-2\nu_{2}+1, \qquad \mu_{2} = -\nu_{1}+\nu_{2}, \qquad \mu_{3} = 2\nu_{1}+\nu_{2}-1.
\end{equation*}
The Ramanujan conjecture then states that $\Re(\mu_{i}) = 0$. The work of Jacquet--Shalika~\cite{JacquetShalika1990-2} shows $|\Re(\mu_{i})| < \frac{1}{2}$. The Fourier--Whittaker coefficients obey the Hecke relation
\begin{equation}\label{Hecke}
\lambda_\pi(n_{1}, 1) \lambda_{\pi}(1, n_{2}) = \sum_{d \mid (n_{1}, n_{2})} \lambda_{\pi} \left(\frac{n_1}{d}, \frac{n_2}{d} \right).
\end{equation}
The Rankin--Selberg bound is known in the following form.
\begin{lemma}\label{Ram bound}
We have for any~$\epsilon > 0$ that
\begin{equation*}
\underset{n_{1}^{2} n_{2} \leq N}{\sum \sum} |\lambda_{\pi}(n_{2}, n_{1})|^{2}
\ll_{\pi, \epsilon} N^{1+\epsilon}.
\end{equation*}
\end{lemma}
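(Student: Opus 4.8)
The plan is to deduce this from the analytic properties of the Rankin--Selberg $L$-function $L(s, \pi \times \tilde{\pi})$ combined with a soft positivity argument (Rankin's trick). First I would record the standard Dirichlet series identity
\begin{equation*}
\sum_{n_{1}=1}^{\infty} \sum_{n_{2}=1}^{\infty} \frac{|\lambda_{\pi}(n_{2}, n_{1})|^{2}}{(n_{1}^{2} n_{2})^{s}} = \frac{L(s, \pi \times \tilde{\pi})}{\zeta(3s)}, \qquad \Re(s) > 1,
\end{equation*}
which follows from the Hecke relations~\eqref{Hecke} by an elementary local computation at each prime (the factor $\zeta(3s)^{-1}$ accounting for the diagonal overcounting); this is classical and recorded, for instance, in Goldfeld~\cite{Goldfeld2006}. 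Grouping terms according to $n = n_{1}^{2} n_{2}$, set $a_{n} \coloneqq \sum_{n_{1}^{2} n_{2} = n} |\lambda_{\pi}(n_{2}, n_{1})|^{2} \geq 0$, so that $\sum_{n \geq 1} a_{n} n^{-s} = L(s, \pi \times \tilde{\pi})/\zeta(3s)$ with all $a_{n}$ nonnegative.

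Next I would invoke the fact that, $\pi$ being cuspidal, the Euler product for $L(s, \pi \times \tilde{\pi})$ converges absolutely in $\Re(s) > 1$ (Jacquet--Shalika), so that $L(s, \pi \times \tilde{\pi})$ is finite there; since $\zeta(3s)^{-1}$ is holomorphic and bounded in $\Re(s) > 1$ (indeed $\Re(3s) > 3$), the value $L(1+\epsilon, \pi \times \tilde{\pi})/\zeta(3+3\epsilon)$ is finite for every $\epsilon > 0$. Rankin's trick then finishes the proof at once: taking $s = 1+\epsilon$, for any $N \geq 1$ we have
\begin{equation*}
\sum_{n \leq N} a_{n} \leq \sum_{n \leq N} a_{n} \left(\frac{N}{n}\right)^{1+\epsilon} \leq N^{1+\epsilon} \sum_{n=1}^{\infty} \frac{a_{n}}{n^{1+\epsilon}} = N^{1+\epsilon} \cdot \frac{L(1+\epsilon, \pi \times \tilde{\pi})}{\zeta(3+3\epsilon)} \ll_{\pi, \epsilon} N^{1+\epsilon},
\end{equation*}
which is exactly the asserted estimate after unfolding the definition of $a_{n}$.

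There is essentially no genuine obstacle here: the only nontrivial input is the absolute convergence of $L(s, \pi \times \tilde{\pi})$ for $\Re(s) > 1$, which is classical and relies only on the boundedness \emph{on average} of the Satake parameters of $\pi$, not on any pointwise Ramanujan-type bound. Should one wish for the sharper statement with $N^{1+\epsilon}$ replaced by $O_{\pi}(N)$ (which is not needed in this paper), one would instead appeal to a Tauberian theorem of Landau type, using the simple pole of $L(s, \pi \times \tilde{\pi})$ at $s = 1$ together with polynomial growth on vertical lines just to the left of $\Re(s) = 1$ furnished by the functional equation and the standard convexity bound; but since the power-saving error is immaterial for our applications, the soft argument above already suffices.
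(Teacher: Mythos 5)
Your argument is correct and is the standard route to this estimate. Note that the paper does not actually prove Lemma~\ref{Ram bound}; it simply records it as the known Rankin--Selberg (Ramanujan on average) bound, so there is no competing proof in the text to compare against. Your reduction to the nonnegativity of the coefficients $a_n = \sum_{n_1^2 n_2 = n} |\lambda_\pi(n_2,n_1)|^2$, together with absolute convergence of $L(s,\pi\times\tilde\pi)$ for $\Re(s)>1$ (Jacquet--Shalika) and Rankin's trick at $s=1+\epsilon$, is exactly the classical argument and is robust: it would go through even if the precise cofactor $\zeta(3s)^{\pm1}$ in the Dirichlet-series identity were replaced by any other Euler product holomorphic and nonvanishing in $\Re(s)>1$, so no delicate bookkeeping is needed. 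One small caveat: the displayed identity is not purely a consequence of the Hecke relations~\eqref{Hecke} alone --- it also uses the explicit shape of the local factors (Casselman--Shalika/Cauchy identity) to match the double Dirichlet series to the Euler product of $L(s,\pi\times\tilde\pi)$; but as you say this is classical and in Goldfeld~\cite{Goldfeld2006}, so the proof stands.
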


As a corollary, we deduce the following estimate.

\begin{corollary}\label{GL3RPavg}
We have for any~$\epsilon > 0$ that
    \begin{equation*}
        \sum_{n_1\leq N_1}\sum_{n_2\leq N_2}|\lambda_\pi(n_1,n_2)|^2\ll (N_1N_2)^{1+\epsilon}.
    \end{equation*}
\end{corollary}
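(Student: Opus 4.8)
The plan is to deduce Corollary~\ref{GL3RPavg} from Lemma~\ref{Ram bound} by dyadic decomposition together with the Hecke relation~\eqref{Hecke}, which lets us re-express $\lambda_\pi(n_1,n_2)$ in terms of the coefficients $\lambda_\pi(m,1)$ (equivalently $\lambda_\pi(1,m)$) that Lemma~\ref{Ram bound} controls. First I would split the ranges $n_1 \leq N_1$ and $n_2 \leq N_2$ into $O((\log N_1)(\log N_2))$ dyadic boxes $n_1 \sim M_1$, $n_2 \sim M_2$, so it suffices to bound $\sum_{n_1 \sim M_1}\sum_{n_2 \sim M_2} |\lambda_\pi(n_1,n_2)|^2 \ll (M_1 M_2)^{1+\epsilon}$ and sum up, absorbing the logarithmic losses into $(N_1N_2)^\epsilon$.

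Next, on each dyadic box I would compare $\sum_{n_1\sim M_1}\sum_{n_2\sim M_2}|\lambda_\pi(n_1,n_2)|^2$ with the one-parameter sum in Lemma~\ref{Ram bound}, which runs over $n_1^2 n_2 \leq N$. The natural choice is $N \asymp M_1^2 M_2$: since $n_1 \sim M_1$ and $n_2 \sim M_2$ force $n_1^2 n_2 \ll M_1^2 M_2$, the box $\{n_1 \sim M_1, n_2 \sim M_2\}$ is contained in $\{n_1^2 n_2 \leq N\}$ (after adjusting constants), hence
\begin{equation*}
\sum_{n_1\sim M_1}\sum_{n_2\sim M_2}|\lambda_\pi(n_1,n_2)|^2 \leq \underset{n_1^2 n_2 \leq N}{\sum\sum}|\lambda_\pi(n_2,n_1)|^2 \ll_{\pi,\epsilon} N^{1+\epsilon} \ll_{\pi,\epsilon} (M_1^2 M_2)^{1+\epsilon}.
\end{equation*}
This already yields $(M_1^2M_2)^{1+\epsilon}$, which is the desired $(M_1M_2)^{1+\epsilon}$ only when $M_1 \ll M_2^{o(1)}$, so this crude containment is wasteful in the regime $M_1 \gg M_2$ and must be supplemented.

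To recover the sharp bound in the remaining regime I would exploit symmetry and the Hecke relation. By~\eqref{Hecke} one has $\lambda_\pi(n_1,1)\lambda_\pi(1,n_2) = \sum_{d\mid(n_1,n_2)}\lambda_\pi(n_1/d,n_2/d)$, so by Möbius inversion $\lambda_\pi(n_1,n_2)$ is a short divisor sum of products $\lambda_\pi(a,1)\lambda_\pi(1,b)$ with $ab \mid n_1 n_2$ in a controlled way; combined with the dual form of Lemma~\ref{Ram bound} (which by the functional equation $\lambda_\pi(n_1,n_2)=\overline{\lambda_{\tilde\pi}(n_2,n_1)}$ also gives $\sum_{n_2^2 n_1 \leq N}|\lambda_\pi(n_2,n_1)|^2 \ll N^{1+\epsilon}$) one can run the same dyadic argument with the roles of $n_1,n_2$ interchanged, obtaining $(M_1 M_2^2)^{1+\epsilon}$. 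Taking the better of the two bounds on each box gives $(M_1M_2)^{1+\epsilon}\cdot \min(M_1,M_2)^{1+\epsilon} \cdot$(something), which is still not quite it; the clean route is instead to use~\eqref{Hecke} together with Cauchy--Schwarz and Lemma~\ref{Ram bound} applied to each factor to get $\sum_{n_1\sim M_1}\sum_{n_2\sim M_2}|\lambda_\pi(n_1,n_2)|^2 \ll (M_1M_2)^{1+\epsilon}$ directly, the divisor sum contributing only $(M_1M_2)^\epsilon$. The main obstacle, and the step requiring care, is precisely this: ensuring that passing through the Hecke relation does not lose more than $(M_1M_2)^\epsilon$ — one must handle the common-divisor sum $\sum_{d\mid(n_1,n_2)}$ and the fact that Lemma~\ref{Ram bound} is genuinely two-dimensional (weighted by $n_1^2 n_2$) rather than a clean product bound, so the bookkeeping of which variable is ``long'' and which is ``short'' in each dyadic box is where all the content lies; everything else is routine dyadic summation.
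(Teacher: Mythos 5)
Your final ``clean route'' --- apply the Hecke relation~\eqref{Hecke} with M\"{o}bius inversion to write $\lambda_\pi(n_1,n_2)=\sum_{d\mid(n_1,n_2)}\mu(d)\lambda_\pi(n_1/d,1)\lambda_\pi(1,n_2/d)$, then Cauchy--Schwarz on the $d$-sum and Lemma~\ref{Ram bound} on each factor --- is exactly the paper's proof. The step you flag as the main obstacle (keeping the total loss to $(N_1N_2)^\epsilon$) is handled by the standard divisor bound: Cauchy--Schwarz on the $d$-sum costs a factor $d((n_1,n_2))\ll (N_1N_2)^\epsilon$, and after pulling $d$ outside the double sum one is left with
\begin{equation*}
(N_1N_2)^\epsilon\sum_{d}\Bigl(\sum_{m_1\leq N_1/d}|\lambda_\pi(m_1,1)|^2\Bigr)\Bigl(\sum_{m_2\leq N_2/d}|\lambda_\pi(1,m_2)|^2\Bigr),
\end{equation*}
which (after bounding each inner sum by $(N_i/d)^{1+\epsilon}$) converges in $d$. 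So you identified the correct route but stopped short of executing it.

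Two parts of your discussion are misdirected, though. First, the dyadic decomposition and the bookkeeping about which of $n_1,n_2$ is ``long'' in each box is unnecessary: the argument above works directly on $\sum_{n_1\leq N_1}\sum_{n_2\leq N_2}$ with no localization, and the crude containment giving $(M_1^2M_2)^{1+\epsilon}$ is a dead end you need not consider at all. Second, your worry that Lemma~\ref{Ram bound} is ``genuinely two-dimensional (weighted by $n_1^2n_2$) rather than a clean product bound'' is unwarranted: specializing $n_1=1$ there immediately gives the one-dimensional bound $\sum_{n\leq N}|\lambda_\pi(n,1)|^2\ll N^{1+\epsilon}$, and by the symmetry $|\lambda_\pi(m,n)|=|\lambda_\pi(n,m)|$ coming from the contragredient the same holds for $\sum_{n\leq N}|\lambda_\pi(1,n)|^2$. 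That one-dimensional slice is the only input needed once the Hecke relation has split the coefficient into a product; the $n_1^2n_2$ weighting of the full two-dimensional Rankin--Selberg bound plays no role.
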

\begin{proof}
    Applying the Hecke relation~\eqref{Hecke} in tandem with M\"{o}bius inversion yields
    \begin{align*}
        \sum_{n_1\leq N_1}\sum_{n_2\leq N_2}|\lambda_\pi(n_1,n_2)|^2=&\sum_{n_1\leq N_1}\sum_{n_2\leq N_2}\left|\sum_{d \mid(n_1,n_2)}\mu(d)\lambda_\pi \left(\frac{n_1}{d}, 1 \right) \lambda_\pi \left(1,\frac{n_2}{d} \right)\right|^2.
    \end{align*}
    By the Cauchy--Schwarz inequality to take out the $d$-sum, the right-hand side is bounded by
    \begin{equation*}
        X^\epsilon\sum_{n_1\leq N_1}\sum_{n_2\leq N_2}\sum_{d \mid(n_1,n_2)} \left|\lambda_\pi \left(\frac{n_1}{d},1 \right)\lambda_\pi \left(1,\frac{n_2}{d} \right) \right|^2\\
        = X^\epsilon \sum_d\sum_{n_1\leq \frac{N_1}{d}}\sum_{n_2\leq \frac{N_2}{d}} |\lambda_\pi(n_1,1)\lambda_\pi(1,n_2)|^2.
    \end{equation*}
    Corollary~\ref{GL3RPavg} now follows from Lemma~\ref{Ram bound}.
\end{proof}

We record the best known bound for individual Fourier--Whittaker coefficients $\lambda_{\pi}(n_{1}, n_{2})$.
\begin{lemma}[Kim--Sarnak~\cite{Kim2003}]\label{KimSarnak}
We have for any~$\epsilon > 0$ that
\begin{equation*}
|\lambda_{\pi}(n_{1}, n_{2})| \ll_{\pi, \epsilon} (n_{1} n_{2})^{\frac{5}{14}+\epsilon}.
\end{equation*}
\end{lemma}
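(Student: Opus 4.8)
The plan is to reduce the bound to the one-variable coefficients at prime powers and then quote the Satake bound of Kim--Sarnak~\cite{Kim2003}. Since $\pi$ is everywhere unramified, the coefficients are multiplicative in the pair $(n_{1},n_{2})$, in the sense that $\lambda_{\pi}(n_{1}n_{1}',n_{2}n_{2}')=\lambda_{\pi}(n_{1},n_{2})\lambda_{\pi}(n_{1}',n_{2}')$ whenever $(n_{1}n_{2},n_{1}'n_{2}')=1$; this is immediate from the Hecke relations~\eqref{Hecke} together with the Euler product of $L(s,\pi)$. It therefore suffices to bound $\lambda_{\pi}(p^{a},p^{b})$ for a prime $p$ and $a,b\in\N_{0}$, and then form the product over $p\mid n_{1}n_{2}$, where the usual divisor-function bound absorbs the multiplicities into $(n_{1}n_{2})^{\epsilon}$.

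Next I would localise further. For $a,b\geq1$ the Hecke relation~\eqref{Hecke} at prime powers reduces to the identity
\begin{equation*}
\lambda_{\pi}(p^{a},p^{b})=\lambda_{\pi}(p^{a},1)\lambda_{\pi}(1,p^{b})-\lambda_{\pi}(p^{a-1},1)\lambda_{\pi}(1,p^{b-1}),
\end{equation*}
whereas if $ab=0$ one of the two indices already equals $1$. Invoking the conjugation symmetry $|\lambda_{\pi}(m,n)|=|\lambda_{\pi}(n,m)|$ of the Fourier--Whittaker coefficients, the task is reduced to bounding $|\lambda_{\pi}(1,p^{c})|$ for $c\in\N_{0}$. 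The local Euler factor of $L(s,\pi)$ gives $\sum_{c\geq0}\lambda_{\pi}(1,p^{c})x^{c}=\prod_{j=1}^{3}(1-\alpha_{\pi,j}(p)x)^{-1}$, so that $\lambda_{\pi}(1,p^{c})=h_{c}(\alpha_{\pi,1}(p),\alpha_{\pi,2}(p),\alpha_{\pi,3}(p))$ is the degree-$c$ complete homogeneous symmetric polynomial in the Satake parameters at $p$, hence a sum of $\binom{c+2}{2}$ monomials of degree $c$.

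The only nonelementary ingredient is the bound of Kim and Sarnak~\cite{Kim2003} toward the generalised Ramanujan conjecture for $\mathrm{GL}_{3}$, namely $|\alpha_{\pi,j}(p)|\leq p^{5/14}$ for every $j$ and every prime $p$. Bounding each monomial of $h_{c}$ trivially then gives $|\lambda_{\pi}(1,p^{c})|\leq\binom{c+2}{2}p^{5c/14}$, and inserting this into the identity above yields $|\lambda_{\pi}(p^{a},p^{b})|\ll(a+1)^{2}(b+1)^{2}(p^{a}p^{b})^{5/14}$. Taking the product over $p\mid n_{1}n_{2}$ and absorbing the polynomial factors into $(n_{1}n_{2})^{\epsilon}$ completes the argument, in fact uniformly in $\pi$. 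The main---and essentially only---obstacle is securing the input $|\alpha_{\pi,j}(p)|\leq p^{5/14}$; given it, the rest is routine bookkeeping, and any improvement on the $\mathrm{GL}_{3}$ Ramanujan bound would immediately sharpen the exponent $\tfrac{5}{14}$ in the lemma.
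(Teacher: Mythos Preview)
The paper does not supply its own proof of this lemma; it simply records the bound and cites~\cite{Kim2003}. Your write-up is the standard reduction of the two-variable Fourier--Whittaker bound to the Satake parameter bound and is correct: the multiplicativity in coprime parts, the M\"obius-inverted Hecke relation
\[
\lambda_{\pi}(p^{a},p^{b})=\lambda_{\pi}(p^{a},1)\lambda_{\pi}(1,p^{b})-\lambda_{\pi}(p^{a-1},1)\lambda_{\pi}(1,p^{b-1})
\]
(which indeed follows from~\eqref{Hecke} by differencing the sums over $d\mid p^{\min(a,b)}$ and $d\mid p^{\min(a-1,b-1)}$), the identification $\lambda_{\pi}(1,p^{c})=h_{c}(\alpha_{\pi,1}(p),\alpha_{\pi,2}(p),\alpha_{\pi,3}(p))$, and the final divisor-type estimate are all in order. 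The symmetry $|\lambda_{\pi}(m,n)|=|\lambda_{\pi}(n,m)|$ you invoke holds because $\lambda_{\tilde{\pi}}(m,n)=\overline{\lambda_{\pi}(n,m)}$ for a Hecke--Maa{\ss} form on $\mathrm{SL}_{3}(\Z)$, so $\lambda_{\pi}(p^{c},1)$ is the complete homogeneous polynomial in the conjugate Satake parameters and the same bound applies. In short, there is nothing to compare against in the paper, and your expansion of the citation into an argument is the expected one.
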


\subsection{$\GL_{3}$ Vorono\u{\i} Summation}\label{subsec:Voronoi}
In conjunction with Poisson summation in Section~\ref{Poisson-summation},~the proof of Theorem~\ref{main} requires the Vorono\u{\i} summation formula attached to Hecke--Maa{\ss} cusp forms on $\mathrm{Z}(\mathbb{R}) \mathrm{SL}_{3}(\mathbb{Z}) \backslash \mathrm{GL}_{3}(\mathbb{R})/\mathrm{O}(3)$. Let $h$ be a compactly supported smooth function on~$\R_{+}$ and denote the corresponding Mellin transform by
\begin{equation*}
\tilde{h}(s) \coloneqq \int_{0}^{\infty} h(x) x^{s-1} dx.
\end{equation*}
Given $\sigma > -1+\max\{-\Re(\alpha_{1}), -\Re(\alpha_{2}), -\Re(\alpha_{3}) \}$ and $a \in \{0, 1 \}$, define
\begin{equation*}
\gamma_{a}(s) \coloneqq \frac{\pi^{-3s-\frac{3}{2}}}{2} \prod_{i = 1}^{3} \frac{\Gamma(\frac{1+s+\alpha_{i}+a}{2})}{\Gamma(\frac{-s-\alpha_{i}+a}{2})}.
\end{equation*}
Let $\gamma_{\pm}(s) \coloneqq \gamma_{0}(s) \mp i \gamma_{1}(s)$, and define
\begin{equation}\label{H_pm}
\mathcal{H}_{\pm}(x) \coloneqq \frac{1}{2\pi i} \int_{(\sigma)} x^{-s} \gamma_{\pm}(s) \tilde{h}(-s) ds.
\end{equation}
We now formulate a version of the $\GL_{3}$ Vorono\u{\i} summation formula; cf.~\cite{Blomer2012,Li2011,MillerSchmid2006}.
\begin{lemma}[$\mathrm{GL}_{3}$ Vorono\u{\i} summation]\label{gl3voronoi}
Let $a, \overline{a} \in \mathbb{Z}$, $q \in \mathbb{N}$ with $(a, q) = 1$,~$a \overline{a} \equiv 1 \tpmod{q}$. Let $h$ be a compactly supported smooth function on $\R_{+}$. Then we have that
\begin{equation*}
\sum_{n = 1}^{\infty} \lambda_{\pi}(n) e \left(\frac{an}{q} \right) h(n)
 = q \sum_{\pm} \sum_{n_{0} \mid q} \sum_{n = 1}^{\infty} \frac{\lambda_{\pi}(n, n_{0})}{nn_{0}} S \left(\overline{a}, \pm n; \frac{q}{n_{0}} \right) \mathcal{H}_{\pm} \left(\frac{nn_{0}^{2}}{q^{3}} \right).
\end{equation*}
\end{lemma}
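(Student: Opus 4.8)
The plan is to deduce the formula from the meromorphic continuation and functional equation of the additively twisted Dirichlet series
\begin{equation*}
L_{a/q}(s) \coloneqq \sum_{n = 1}^{\infty} \frac{\lambda_{\pi}(n) e(an/q)}{n^{s}},
\end{equation*}
which converges absolutely for $\Re(s)$ large by the Rankin--Selberg bound in Lemma~\ref{Ram bound}. Mellin inversion gives $h(x) = \frac{1}{2\pi i} \int_{(\sigma)} \tilde{h}(s) x^{-s} \, ds$ for $\sigma$ large, so that
\begin{equation*}
\sum_{n = 1}^{\infty} \lambda_{\pi}(n) e \left(\frac{an}{q} \right) h(n) = \frac{1}{2\pi i} \int_{(\sigma)} L_{a/q}(s) \tilde{h}(s) \, ds.
\end{equation*}
It then suffices to continue $L_{a/q}(s)$ to an entire function (no poles appear, $\pi$ being cuspidal), to establish a functional equation relating $s$ to $1 - s$, to shift the contour to a line $\Re(s) = -\sigma'$ with $\sigma'$ large, and to recognise the resulting integral, after a term-by-term expansion of the dual Dirichlet series, as the sum over $\mathcal{H}_{\pm}$ furnished by~\eqref{H_pm}. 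The rapid decay of $\tilde{h}$ on vertical lines together with Stirling's formula~\eqref{eq:Stirling} for the gamma factors that enter the functional equation controls the horizontal segments and legitimises the shift.

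To produce the functional equation of $L_{a/q}(s)$ I would convert the additive twist into multiplicative twists: expand $e(an/q)$ as a linear combination of $\chi(n)$ over Dirichlet characters $\chi$ of conductor dividing $q$, with Gauss-sum coefficients, grouping the terms according to the conductor of $\chi$ and reducing to primitive characters via the hereditary structure of the twisted $L$-functions. This reduces the matter to the classical functional equations of $L(s, \pi \otimes \chi)$, whose completed versions are entire and satisfy $s \leftrightarrow 1 - s$ with root number a normalised cube of a Gauss sum. Substituting these back, opening the Gauss sums, and resumming the characters collapses the character sum first into Ramanujan sums and then, by twisted multiplicativity, into Kloosterman sums $S(\overline{a}, \pm n; q/n_{0})$; the attendant divisor sum over $n_{0} \mid q$, and the replacement of $\lambda_{\pi}(n)$ by $\lambda_{\pi}(n, n_{0})$ on the dual side, are forced by this arithmetic together with the Hecke relation~\eqref{Hecke}. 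The two summands indexed by $\pm$ reflect the two archimedean parities $a \in \{0, 1\}$ of the $\mathrm{GL}_{3}$ functional equation.

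On the archimedean side, the local factor of $L(s, \pi)$ is built from the three shifted gamma factors appearing in $\gamma_{0}(s)$ and $\gamma_{1}(s)$, and the combinations $\gamma_{\pm}(s) = \gamma_{0}(s) \mp i \gamma_{1}(s)$ are exactly those that occur in the $\mathrm{GL}_{3}$ functional equation, matched to the sign $\pm n$ inside the Kloosterman sum. After the contour shift and the term-by-term expansion, the remaining $s$-integral against $\gamma_{\pm}(s)$, the power $(nn_{0}^{2}/q^{3})^{-s}$ produced by the conductor of $\pi \otimes \chi$, and $\tilde{h}(-s)$ reproduces $\mathcal{H}_{\pm}(nn_{0}^{2}/q^{3})$ verbatim from~\eqref{H_pm} (shifting once more, if necessary, into the range of $\sigma$ in which~\eqref{H_pm} is valid, which is permissible by Stirling). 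Assembling the three ingredients yields the stated identity.

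The main obstacle is the bookkeeping in the second step: for composite $q$ one must carefully track how the whole family of functional equations for the twists $\pi \otimes \chi$ over characters $\chi$ of every conductor dividing $q$, together with their Gauss sums, reassembles into the single clean term with weight $\lambda_{\pi}(n, n_{0})/(nn_{0})$ and modulus $q/n_{0}$, and in particular how the non-primitive contributions and the normalisation relating $\pi$ to $\tilde{\pi}$ conspire correctly. A secondary, purely analytic, point is the legitimacy of the contour shifts when $L_{a/q}(s)$ converges only conditionally; this is handled by truncation, Lemma~\ref{Ram bound}, partial summation, and the polynomial growth of $L_{a/q}(s)$ in vertical strips. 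Since the statement is a mild repackaging of the formulas of Miller--Schmid~\cite{MillerSchmid2006}, Li~\cite{Li2011}, and Blomer~\cite{Blomer2012} --- and can alternatively be derived from the automorphic-distribution formalism without any $L$-function input --- one may equally well transcribe the argument from any of these sources and check that the normalisations agree with the present $\gamma_{\pm}$ and $\mathcal{H}_{\pm}$.
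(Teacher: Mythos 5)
The paper does not prove Lemma~\ref{gl3voronoi}; it states it as a standard result with the citation ``cf.~\cite{Blomer2012,Li2011,MillerSchmid2006}.'' There is therefore no in-text proof for your sketch to be compared against, and the fair comparison is with those references. Your roadmap --- Mellin-invert against $\tilde h$, decompose $e(an/q)$ into Dirichlet characters with Gauss-sum coefficients, invoke the functional equations of the twists $L(s, \pi \otimes \chi)$, shift the contour, and recombine --- is the ``$L$-function route,'' which is indeed one valid derivation and is the one closest in spirit to Li's and Blomer's treatments; it is genuinely different from Miller--Schmid's, who work with automorphic distributions and never pass through $L$-functions. The point where the Kloosterman sum materialises (the combination $\tau(\chi)\,\overline{\tau(\chi)}^{3}$ from the Gauss sum in the character expansion times the cube of a Gauss sum in the $\mathrm{GL}_3$ root number reduces, via $\tau(\chi)\tau(\bar\chi)=\chi(-1)q$, to an average of $\tau(\bar\chi)^{2}\bar\chi(\cdot)$, i.e.\ a two-variable Kloosterman sum) is correctly anticipated by your remark about ``opening the Gauss sums,'' though you may wish to make it explicit, since it is the step where the rank of the group enters. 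You also correctly flag the genuinely painful part, namely that for composite $q$ one must stratify by the conductor $c\mid q$ of $\chi$, separate the terms with $(n,q)>1$ via the Hecke relation~\eqref{Hecke}, and check that the resulting mess reassembles into the single clean sum over $n_0\mid q$ with $\lambda_\pi(n,n_0)$, $S(\overline a,\pm n;q/n_0)$, and the argument $nn_0^2/q^3$; this is where almost all of the work in Li's and Blomer's write-ups lives, and glossing over it means the sketch is a roadmap rather than a proof. Given that the paper itself elects to cite rather than rederive, your concluding suggestion --- transcribe from one of the cited sources and verify normalisations against~\eqref{H_pm} --- is the appropriate level of rigour here.
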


The behaviour of the integral transform $\mathcal{H}_{\pm}$ on the dual side is determined in the literature.
\begin{lemma}[{Blomer~\cite[Lemma 6]{Blomer2012}}]\label{voronoi}
Let $h$ be a compactly supported smooth function on $[a, b] \subset (0, \infty)$, and let $\mathcal{H}_{\pm}$ be defined by~\eqref{H_pm}. Then there exist constants $\gamma_{\ell}$ depending~only on the Langlands parameters $(\alpha_{1}, \alpha_{2}, \alpha_{3})$ such that
\begin{equation*}
\mathcal{H}_{\pm}(x) = x \int_{0}^{\infty} h(y) \sum_{\ell = 1}^{L} \frac{\gamma_{\ell}}{(xy)^{\frac{\ell}{3}}}e(\pm 3(xy)^{\frac{1}{3}}) dy + \mathcal{R}(x),
\end{equation*}
where the error term $\mathcal{R}(x)$ satisfies
\begin{equation*}
x^{k} \frac{d^{k}}{dx^{k}} \mathcal{R}(x) \ll_{} \norm{h}_\infty^{1+\frac{k-L}{3}}
\end{equation*}
with an implicit constant depending at most on $a, b, \alpha_{1}, \alpha_{2}, \alpha_{3}, k$, and $L$.
\end{lemma}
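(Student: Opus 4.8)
\emph{Proof strategy.} This is \cite[Lemma 6]{Blomer2012}, so in the write-up it is enough to cite that reference; the argument, which I now outline, is the classical saddle-point analysis of the $\GL_{3}$ Hankel-type kernel (cf.\ also \cite{Li2011,MillerSchmid2006}). I would begin by feeding the Mellin representation $\tilde{h}(-s)=\int_{0}^{\infty}h(y)y^{-s-1}\,dy$ into the definition~\eqref{H_pm} and interchanging the two integrals. This is permissible: $h\in C_{c}^{\infty}([a,b])$ forces $\tilde{h}$ to decay faster than any polynomial on vertical lines, while Stirling's formula~\eqref{eq:Stirling} shows that $\gamma_{\pm}(\sigma+i\tau)$ grows only polynomially in $|\tau|$ — the factors $e^{-\pi|\tau|/2}$ cancel between the numerator and the denominator of the Gamma quotient because their arguments are mutual reflections. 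One is then reduced to
\[
\mathcal{H}_{\pm}(x)=\int_{0}^{\infty}h(y)\,y^{-1}\,G_{\pm}(xy)\,dy, \qquad G_{\pm}(z)\coloneqq\frac{1}{2\pi i}\int_{(\sigma)}z^{-s}\gamma_{\pm}(s)\,ds,
\]
and since $\operatorname{supp}(h)$ is a fixed compact subset of $(0,\infty)$ the whole lemma comes down to the large-$z$ behaviour of the kernel $G_{\pm}(z)$.

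Next I would substitute the full Stirling expansion into each of the six Gamma factors of $\gamma_{\pm}(\sigma+i\tau)$ and write $z^{-s}=z^{-\sigma}e^{-i\tau\log z}$. The integrand then assumes the shape $z^{-\sigma}A(\tau)\,e^{i\Phi_{z}(\tau)}$, with amplitude $A(\tau)\asymp|\tau|^{3\sigma+3/2}$ and phase $\Phi_{z}(\tau)=3\tau\log\frac{|\tau|}{2\pi e}-\tau\log z+\cdots$, whose stationary point lies at $|\tau|\asymp z^{1/3}$. A stationary-phase evaluation at that point produces a leading term of size $z^{2/3}$ carrying the oscillation $e(\pm 3z^{1/3})$ — the two signs of $\tau$ giving rise to the subscript $\pm$ — and pushing the expansion to higher order (carrying the Stirling amplitude and the phase correction further and applying the stationary-phase expansion repeatedly) gives an asymptotic series
\[
G_{\pm}(z)=z\sum_{\ell=1}^{L}\gamma_{\ell}\,z^{-\ell/3}\,e(\pm 3z^{1/3})+(\text{remainder}),
\]
with constants $\gamma_{\ell}$ depending only on $(\alpha_{1},\alpha_{2},\alpha_{3})$. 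Reinserting this and absorbing the factor $y^{-1}$ reproduces the main term claimed in the lemma.

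For the remainder $\mathcal{R}(x)$ I would truncate the Stirling expansion at level $L$, where the tail is $O_{M}(|\tau|^{-M})$ for $M$ as large as desired, and truncate the stationary-phase expansion at the matching order; integrating over $\tau$ and then $y$ yields the bound $x^{k}\frac{d^{k}}{dx^{k}}\mathcal{R}(x)\ll\norm{h}_{\infty}^{1+\frac{k-L}{3}}$ of the statement, the $x$-derivatives being taken under the integral sign (each $\partial_{x}$ producing a harmless factor $-s/x$ in the regime $|\tau|\asymp z^{1/3}$). I expect the real work to be the bookkeeping in this saddle-point step: keeping the asymptotic uniform in $x$ over the full range dictated by $\operatorname{supp}(h)$, controlling the $k$-th derivative simultaneously, and justifying the steepest-descent deformation through the region where $\gamma_{\pm}$ is large (which requires Stirling-type bounds for the Gamma factors in sectors, not merely on vertical lines). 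The slickest way to sidestep the last point is to recognise that $G_{\pm}$ is, up to elementary factors, a Meijer $G^{3,0}_{0,3}$-function — equivalently a linear combination of ${}_{0}F_{2}$-hypergeometric functions whose parameters are built from the $\alpha_{i}$ — so that the required large-argument asymptotics are classical and deliver precisely the stated expansion and error bound. All of this being carried out in \cite[Lemma 6]{Blomer2012}, we simply cite that result.
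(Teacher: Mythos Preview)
Your proposal is correct and matches the paper's treatment: the paper does not prove this lemma at all but simply quotes it as \cite[Lemma~6]{Blomer2012}, and you do the same while additionally sketching the underlying stationary-phase argument. The sketch itself is accurate in outline and aligns with Blomer's actual proof (Stirling on $\gamma_{\pm}$, stationary point at $|\tau|\asymp z^{1/3}$, asymptotic expansion of the resulting Hankel-type kernel), so there is nothing to correct.
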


\section{Proof of Theorem~\ref{main}}\label{proof-of-the-main-theorem}
In this section, we embark on the proof of Theorem~\ref{main}. Recall that our goal is to estimate
\begin{equation*}
    \mathcal{A}_{\pi}(T, \alpha, \beta) \coloneqq \sum_{n \leq T} \lambda_{\pi}(n) e(\alpha n^{\beta}) \ll_{\pi, \alpha, \beta, \epsilon} T^{\frac{9+10\beta}{17}+\epsilon}+T^{\frac{19}{14}-\beta+\epsilon}.
\end{equation*}
We will consider $\pi$, $\alpha$, $\beta$ fixed, and employ the usual convention that $\epsilon$ is an arbitrarily small positive quantity, which may change from line to line. We will then drop the dependence on $\pi$, $\alpha$, $\beta$, $\epsilon$, but each inequality is allowed to possess an implicit constant depending on them. Without loss of generality, we shall assume that $\alpha>0$.

\subsection{Smoothing}
A dyadic subdivision yields
\begin{equation*}
\mathcal{A}_{\pi}(T, \alpha, \beta) \ll T^{\epsilon} \sup_{1 \leq N \leq T} \sum_{\frac{N}{2} < n \leq N} \lambda_{\pi}(n) e(\alpha n^{\beta}).
\end{equation*}
Let $Y \geq 1$ be a parameter at our disposal. Approximating the indicator function $\mathbf{1}_{(\frac{N}{2},N]}$ by~a $Y$-inert function $W(\frac{\cdot}{N})$ that takes $1$ on $[\frac{1}{2}+\frac{1}{Y}, 1-\frac{1}{Y}]$ and $0$ outside $[\frac{1}{2}, 1]$, and bounding the error term with Lemma~\ref{KimSarnak}, we deduce
\begin{equation}\label{DyadicSmoothing}
\mathcal{A}_\pi(T, \alpha, \beta) \ll T^{\epsilon} \sup_{1 \leq N \leq T} \left(|\mathcal{S}(N)| + \frac{N^{\frac{19}{14}+\epsilon}}{Y}\right),
\end{equation}
where
\begin{equation*}
\mathcal{S}(N) \coloneqq \sum_{n} \lambda_{\pi}(n) e(\alpha n^{\beta}) W \left(\frac{n}{N} \right).
\end{equation*}

\subsection{Applying the $\delta$-Symbol}
We now use Lemma~\ref{DeltaCor} to separate the oscillation of $\lambda_\pi(n)$ from $e(\alpha n^{\beta}) W(\frac{n}{N})$. Let $C \geq 1$ be a parameter at our disposal, and fix a smooth function $U$ that takes $1$ on $[1, 2]$ and $0$ outside $[\frac{1}{2}, \frac{5}{2}]$. Then
\begin{align*}
\mathcal{S}(N) &= \sum_{m} \lambda_{\pi}(m) U \left(\frac{m}{N} \right) \sum_{n} e(\alpha n^{\beta}) W \left(\frac{n}{N} \right) \delta(n = m)\\
& = \frac{1}{\mathcal{C}} \sum_{c} \frac{1}{c} \sum_{m} \lambda_{\pi}(m) U \left(\frac{m}{N} \right) \sum_{n} e(\alpha n^{\beta}) W \left(\frac{n}{N} \right) \sum_{a \tpmod{c}} e \left(\frac{a(m-n)}{c} \right) V_{0} \left(\frac{c}{C}, \frac{m-n}{cC} \right)
\end{align*}
for some $\mathcal{C} \sim C$ and a fixed smooth function $V_{0}$ satisfying $V_{0}(x, y) \ll \delta(|x|, |y| \ll 1)$. Pulling out the divisor $(a, c)$ implies
\begin{multline*}
    \mathcal{S}(N) \ll \frac{1}{C} \sum_{b} \sum_{c} \frac{1}{bc} \sumast_{a \tpmod{c}} \sum_{m} \lambda_{\pi}(m) e \left(\frac{a m}{c} \right) U \left(\frac{m}{N} \right)\\
    \times \sum_{n} e \left(\alpha n^{\beta}-\frac{an}{c} \right) W \left(\frac{n}{N} \right) V_{0} \left(\frac{bc}{C}, \frac{m-n}{bcC} \right).
\end{multline*}

\subsection{Vorono\u{\i} Summation in \texorpdfstring{$m$}{m}}\label{Voronoi-summation-in-m}
Applying $\mathrm{GL}_3$ Vorono\u{\i} summation (Lemma~\ref{gl3voronoi} together with Lemma~\ref{voronoi}) to the $m$-sum, we deduce
\begin{multline*}
    \sum_{m} \lambda_{\pi}(m) e \left(\frac{a m}{c} \right)U\left(\frac{m}{N} \right)V_{0} \left(\frac{bc}{C}, \frac{m-n}{bcC} \right)\\
    = \frac{N^{\frac{2}{3}}}{c} \sum_\pm\sum_{m_{0} \mid c} \sum_{m} \lambda_\pi(m,m_{0})\left(\frac{m_{0}}{m} \right)^{\frac{1}{3}}S\left(\overline{a}, \pm m;\frac{c}{m_{0}} \right) \mathcal{J}_{1}(m_{0}^{2}m,n,c),
\end{multline*}
where\footnote{We suppress less important variables from the notation, and it applies to the ensuing integral transforms.}
\begin{equation}\label{eq:J1}
    \mathcal{J}_{1}(m_{0}^{2}m,n,c) \coloneqq \int_{0}^\infty \Phi\left(y, \frac{m_{0}^{2}m}{c^3} \right)V_{0} \left(\frac{bc}{C}, \frac{Ny-n}{bcC} \right)e \left(\pm\frac{3(m_{0}^{2}mNy)^{\frac{1}{3}}}{c} \right)dy
\end{equation}
for some flat function $\Phi$ supported on $[\frac{1}{2}, \frac{5}{2}] \times \R$. This leads to the expression
\begin{multline*}
    \mathcal{S}(N)\ll \frac{N^{\frac{2}{3}}}{C} \sum_\pm\sum_b\sum_c \frac{1}{bc^{2}} \sumast_{a\tpmod{c}} \sum_{m_{0} \mid c} \sum_{m} \lambda_{\pi}(m,m_{0})\left(\frac{m_{0}}{m} \right)^{\frac{1}{3}} S\left(\overline{a}, \pm m;\frac{c}{m_{0}} \right)\\
    \times \sum_{n} e \left(\alpha n^{\beta}-\frac{an}{c} \right) W \left(\frac{n}{N} \right) \mathcal{J}_{1}(m_{0}^{2}m,n,c).
\end{multline*}

\subsection{Poisson Summation in \texorpdfstring{$n$}{n}}\label{Poisson-summation-in-n}
Applying Poisson summation (Lemma~\ref{Fouvry-Kowalski-Michel}) to the $n$-sum, we deduce
\begin{equation*}
    \sum_{n} e \left(\alpha n^{\beta}-\frac{a n}{c} \right) W \left(\frac{n}{N} \right) \mathcal{J}_{1}(m_{0}^{2}m,n,c)
    =\frac{N}{c} \sum_{n\in\Z} \sum_{\gamma\tpmod{c}}e \left(\frac{(n-a)\gamma}{c} \right) \mathcal{J}_{2}(m_{0}^{2}m,n,c),
\end{equation*}
where
\begin{align}\label{eq:J2}
    \begin{split}
    \mathcal{J}_{2}(m_{0}^{2}m,n,c) &\coloneqq \int_{0}^\infty W(x) \mathcal{J}_{1}(m_{0}^{2}m,Nx,c)e \left(\alpha(Nx)^\beta-\frac{nNx}{c} \right)dx\\
    & = \int_{0}^\infty\int_{0}^\infty W(x)\Phi\left(y, \frac{m_{0}^{2}m}{c^3} \right)V_{0} \left(\frac{bc}{C}, \frac{N(y-x)}{bcC} \right)\\
    & \quad \times e \left(\pm\frac{3(m_{0}^{2}mNy)^{\frac{1}{3}}}{c}+\alpha(Nx)^\beta-\frac{nNx}{c} \right)dxdy.
    \end{split}
\end{align}
The sum over $\gamma$ is equal to $c \cdot \delta(n \equiv a \tpmod{c})$. Making a change of variables $c\mapsto cm_{0}$~gives
\begin{equation*}
    \mathcal{S}(N)\ll \frac{N^{\frac{5}{3}}}{C} \sum_\pm\sum_b\sum_c \frac{1}{bc^{2}} \sum_{m_{0}} \sum_{m} \frac{\lambda_{\pi}(m,m_{0})}{m_{0}^{\frac{5}{3}}m^{\frac{1}{3}}} \sum_{\substack{n\in\Z\\(n,cm_0)=1}} S(\pm m, \overline{n}; c) \mathcal{J}_{2}(m_{0}^{2}m,n,cm_{0}).
\end{equation*}
If we choose $Y\leq N^\beta T^{-\epsilon}$ and $C\leq \sqrt{N}$, then repeated integration by parts in $x$ and $y$ in~\eqref{eq:J2} ensures an arbitrary saving unless
\begin{equation*}
    m_{0}^{2}m \ll \frac{N^{2}T^\epsilon}{b^3C^3}, \qquad n>0, \qquad \frac{cm_0T^{-\epsilon}}{N^{1-\beta}}\ll n\ll \frac{cm_0T^\epsilon}{N^{1-\beta}}.
\end{equation*}
Therefore, we arrive at
\begin{multline*}
    \mathcal{S}(N)\ll \frac{N^{\frac{5}{3}}}{C} \sum_\pm\sum_b\sum_c \frac{1}{bc^{2}} \mathop{\sum\sum}_{m_{0}^{2}m\ll\frac{N^{2}T^\epsilon}{b^3C^3}} \frac{\lambda_{\pi}(m,m_{0})}{m_{0}^{\frac{5}{3}}m^{\frac{1}{3}}} \\
    \times \sum_{\substack{\frac{cm_0T^{-\epsilon}}{N^{1-\beta}}\ll n\ll \frac{cm_0T^\epsilon}{N^{1-\beta}}\\(n,cm_0)=1}} S(\pm m, \overline{n}; c) \mathcal{J}_{2}(m_{0}^{2}m,n,cm_{0})+T^{-2023}.
\end{multline*}

\subsection{Rewriting the Integral Transform}
In anticipation of the Cauchy--Schwarz step,~it is convenient to rewrite slightly the integral $\mathcal{J}_2$ and extract some of its oscillations. Making~a change of variables
\begin{equation*}
    Ny \mapsto Nx+bcm_0Cu,
\end{equation*}
we obtain
\begin{multline*}
    \mathcal{J}_{2}(m_{0}^{2}m,n,cm_0) = \frac{bcm_0C}{N}\int_\R\int_{0}^\infty W(x)\Phi\left(x+\frac{bcm_0Cu}{N}, \frac{m}{c^3m_0} \right)V_{0} \left(\frac{bcm_0}{C}, u \right)\\
    \times e \left(\pm\frac{3(m(Nx+bcm_0Cu))^{\frac{1}{3}}}{cm_0^{\frac{1}{3}}}+\alpha(Nx)^\beta-\frac{nNx}{cm_0} \right)dxdu.
\end{multline*}

\begin{remark}
    This is one of the strengths of the circle method that we choose. The bump function $V_{0}$ being fixed allows one to extract the oscillations via a simple change of variables.
\end{remark}

Note that for any $x\in\R$ and $-1<\sigma<0$,
\begin{equation}\label{eq:Taylor}
    e^{ix}-1 = \int_{(\sigma)} \Gamma(s) e \left(\frac{s}{4} \right) x^{-s}\frac{ds}{2\pi i}.
\end{equation}
This is immediate from shifting the contour to the left and then comparing it with the Taylor expansion of $e^{ix}-1$. The identity~\eqref{eq:Taylor} gives
\begin{align*}
    \mathcal{J}_2(m_0^2m,n,cm_0)& = \frac{bcm_0C}{N}\int_\R\int_{0}^\infty W(x)\Phi\left(x+\frac{bcm_0Cu}{N}, \frac{m}{c^3m_0} \right)\\
    & \quad \times V_{0} \left(\frac{bcm_0}{C}, u \right)e \left(\alpha(Nx)^\beta-\frac{nNx}{cm_0} \right)\\
    & \quad \times \left(1+\int_{(-\frac{1}{2}-\epsilon)}\Gamma(s)e\left(\frac{s}{4}\right)\left(\pm\frac{3(m(Nx+bcm_0Cu))^{\frac{1}{3}}}{cm_0^{\frac{1}{3}}}\right)^{-s}\frac{ds}{2\pi i}\right)dxdu.
\end{align*}
It follows from Stirling's formula~\eqref{eq:Stirling} that the $s$-integral converges absolutely. Furthermore, repeated integration by parts in the $s$-integral ensures an arbitrary saving unless
\begin{equation*}
    |\Im(s)|\ll \left(\frac{mN}{c^{3} m_0} \right)^{\frac{1}{3}} T^\epsilon.
\end{equation*}

\subsection{Cauchy--Schwarz in $m$}\label{Cauchy-Schwarz}
Applying dyadic subdivisions on $m_0^2m\sim M$ and $cm_0\sim C_0$, and the Cauchy--Schwarz inequality with Corollary~\ref{GL3RPavg} to take out the $(b,m_0,m)$-sums and $(u,s)$-integrals with the truncation $|\Im(s)|\ll \frac{(MN)^{\frac{1}{3}}T^\epsilon}{C_0}$, we derive
\begin{equation}\label{RNSplit}
    \mathcal{S}(N)\ll \sup_{\pm}\sup_{\eta \in \{0,1 \}}\sum_b \sup_{m_0\sim M_0}\sup_{|u|\ll1}\sup_{C_0\ll \frac{C}{b}}\sup_{M\ll \frac{N^2T^\epsilon}{b^3C^3}}\int_{|t|\ll \frac{(MN)^{\frac{1}{3}} T^\epsilon}{C_0}}\frac{M^{\frac{1}{6}}N^{\frac{2}{3}}T^\epsilon}{\sqrt{1+|t|^{2+\epsilon}}} \sqrt{\mathcal{R}(N)} dt+T^{-2023},
\end{equation}
where 
\begin{equation*}
    \mathcal{R}(N) \coloneqq \sum_{M\leq m_{0}^{2}m<2M}\left|\sum_{\frac{C_0}{m_0}\leq c<\frac{2C_0}{m_0}} \frac{1}{c} \sum_{\substack{\frac{C_0T^{-\epsilon}}{N^{1-\beta}}\ll n\ll \frac{C_0T^\epsilon}{N^{1-\beta}}\\(n,cm_0)=1}} S(\pm m, \overline{n}; c) \mathcal{J}_{3}^\eta(m,n,c)\right|^2
\end{equation*}
with
\begin{multline}\label{eq:J3}
    \mathcal{J}_{3}^\eta( m, n, c) \coloneqq V_{0} \left(\frac{bcm_0}{C}, u \right) \int_{0}^\infty W(x)\Phi\left(x+\frac{bcm_0Cu}{N}, \frac{m}{c^3m_0} \right)\\
    \times e \left(\alpha(Nx)^\beta-\frac{nNx}{cm_0} \right)\left(\frac{(MN)^{\frac{1}{6}}}{\sqrt{m_0}}\left(\frac{x+\frac{bcm_0Cu}{N}}{c^3}\right)^{\frac{1}{6}+\frac{\epsilon}{3}-\frac{it}{3}} \right)^{\eta}dx.
\end{multline}
Before we open the square, we lengthen the $m$-sum in anticipation of Poisson summation in the ensuing analysis. Let $L\geq 1$ be a parameter at our disposal, and fix a smooth function~$U_0$ supported on $[-3,3]$ such that $U_0(x)=1$ for $x \in[-2, 2]$. Then
\begin{equation}\label{RNafterlengthening}
    \mathcal{R}(N)\leq \sum_mU_0\left(\frac{m_0^2m}{LM}\right)\left|\sum_{\frac{C_0}{m_0}\leq c<\frac{2C_0}{m_0}} \frac{1}{c} \sum_{\substack{\frac{C_0T^{-\epsilon}}{N^{1-\beta}}\ll n\ll \frac{C_0T^\epsilon}{N^{1-\beta}}\\(n,cm_0)=1}} S(\pm m, \overline{n}; c) \mathcal{J}_{3}^\eta(m,n,c)\right|^2.
\end{equation}
This step counterweights zero frequency and nonzero frequencies in Poisson summation, and the optimisation of the parameter $L$ thus governs the quality of the final bound.

\subsection{Poisson Summation in \texorpdfstring{$m$}{m}}\label{Poisson-summation-in-m}

Opening the square, the $m$-sum boils down to
\begin{equation*}
    \sum_m U_0\left(\frac{m_0^2m}{LM}\right) S(\pm m, \overline{n_1}; c_1) S(\pm m, \overline{n_2}; c_2) \mathcal{J}_{3}^\eta(m,n_1,c_1)\overline{\mathcal{J}_{3}^\eta(m,n_2,c_2)}.
\end{equation*}
Poisson summation (Lemma~\ref{Fouvry-Kowalski-Michel}) then yields
\begin{equation*}
    \frac{LM}{c_1c_2}\sum_m\mathcal{C}^\pm(m,n_1,n_2;c_1,c_2)\mathcal{J}_4^\eta(m,n_1,n_2,c_1,c_2),
\end{equation*}
where
\begin{equation*}
    \mathcal{C}^\pm(m,n_1,n_2;c_1,c_2)\coloneqq \sum_{\gamma\tpmod{c_2c_2}}S(\pm \gamma, \overline{n_1}; c_1) S(\pm \gamma, \overline{n_2}; c_2)e\left(\frac{m\gamma}{c_1c_2}\right)
\end{equation*}
and
\begin{equation}\label{eq:J4}
    \mathcal{J}_4^\eta(m,n_1,n_2,c_1,c_2)\coloneqq \int_\R U_0(y) \mathcal{J}_{3}^\eta(LMy,n_1,c_1)\overline{\mathcal{J}_{3}^\eta(LMy,n_2,c_2)}e\left(-\frac{mLMy}{c_1c_2}\right)dy.
\end{equation}
Repeated integration by parts in the $y$-integral ensures an arbitrary saving unless
\begin{equation*}
    |m|\ll \frac{C_0^2T^\epsilon}{m_0^2LM}.
\end{equation*}
Inserting the above manipulations back into~\eqref{RNafterlengthening}, we obtain
\begin{multline}\label{RNAfterPoisson}
    \mathcal{R}(N)\ll LM\mathop{\sum\sum}_{\frac{C_0}{m_0}\leq c_1,c_2<\frac{2C_0}{m_0}} \frac{1}{(c_1c_2)^2} \mathop{\sum\sum}_{\substack{\frac{C_0T^{-\epsilon}}{N^{1-\beta}}\ll n_1,n_2\ll \frac{C_0T^\epsilon}{N^{1-\beta}}\\(n_1,c_1m_0)=(n_2,c_2m_0)=1}}\\
    \times \sum_{|m|\ll \frac{C_0^2T^\epsilon}{m_0^2LM}}\mathcal{C}^\pm(m,n_1,n_2;c_1,c_2)\mathcal{J}_4^\eta(m,n_1,n_2,c_1,c_2) + T^{-2023}.
\end{multline}
For the character sum $\mathcal{C}^\pm$, summing over $\gamma\tpmod{c_1c_2}$ yields
\begin{equation*}   \mathcal{C}^\pm(m,n_1,n_2;c_1,c_2)=c_1c_2\mathop{\sumast_{\alpha_1\tpmod{c_1}}\sumast_{\alpha_2\tpmod{c_2}}}_{\alpha_1c_2+\alpha_2c_1\equiv \mp m\tpmod{c_1c_2}}e\left(\frac{\overline{\alpha_1n_1}}{c_1}+\frac{\overline{\alpha_2n_2}}{c_2}\right).
\end{equation*}
Write $c_0=(c_1,c_2)$, $c_{1,0}=(\frac{c_1}{c_0},c_0^\infty)$, $c_{2,0}=(\frac{c_2}{c_0},c_0^\infty)$, $c_1=c_0c_{1,0}c_1'$, $c_2=c_0c_{2,0}c_2'$, and $(c_1',c_2')=(c_1'c_2',c_0)=1$. The congruence condition $\alpha_{1} c_{2}+\alpha_{2} c_{1} \equiv \mp m \tpmod{c_{1} c_{2}}$ then decomposes as
\begin{gather*}
    c_0 \mid m, \qquad \alpha_1c_{2,0}c_2'+\alpha_2c_{1,0}c_1'\equiv \mp \frac{m}{c_0} \tpmod{c_0c_{1,0}c_{2,0}},\\
    \alpha_1\equiv \mp m\overline{c_2}\tpmod{c_1'}, \qquad \alpha_2\equiv \mp m\overline{c_1}\tpmod{c_2'}.
\end{gather*}
By the Chinese Remainder Theorem, we observe
\begin{multline*}
    \mathcal{C}^\pm(m,n_1,n_2;c_1,c_2)=c_1c_2e\left(\mp\frac{c_2\overline{c_0c_{1,0}mn_1}}{c_1'}\mp\frac{c_1\overline{c_0c_{2,0}mn_2}}{c_2'}\right)\\
    \times \delta(c_0 \mid m)\mathop{\sumast_{\alpha_1\tpmod{c_0c_{1,0}}}\sumast_{\alpha_2\tpmod{c_0c_{2,0}}}}_{ \alpha_1c_{2,0}c_2'+\alpha_2c_{1,0}c_1'\equiv \mp \frac{m}{c_0} \tpmod{c_0c_{1,0}c_{2,0}}}e\left(\frac{\overline{\alpha_1c_1'n_1}}{c_0c_{1,0}}+\frac{\overline{\alpha_2c_2'n_2}}{c_0c_{2,0}}\right).
\end{multline*}
In particular, if $m=0$, then
\begin{align*}
    \mathcal{C}^\pm(0,n_1,n_2;c_1,c_2) &= c_1^2\delta(c_1=c_2)\sumast_{\alpha\tpmod{c_1}}e\left(\frac{\alpha(\overline{n_1}-\overline{n_2})}{c_1}\right)\\
    &= c_1^2\sum_{ac'=c_1}\mu(a)c'\delta(c_1=c_2, n_1\equiv n_2\tpmod{c'}).
\end{align*}
Hence, the contribution of $m=0$ to~\eqref{RNAfterPoisson} is given by
\begin{equation*}
    LM\sum_{\frac{C_0}{m_0}\leq c<\frac{2C_0}{m_0}} \frac{1}{c^2}\sum_{ac'=c}\mu(a)c' \mathop{\sum\sum}_{\substack{\frac{C_0T^{-\epsilon}}{N^{1-\beta}}\ll n_1,n_2\ll \frac{C_0T^\epsilon}{N^{1-\beta}}\\n_1\equiv n_2\tpmod{c'}\\(n_1n_2,cm_0)=1}}\mathcal{J}_4^\eta(0,n_1,n_2,c_1,c_2).
\end{equation*}
Applying the second derivative test on the integral transform $\mathcal{J}_{3}^{\eta}$ in~\eqref{eq:J3} yields
\begin{equation*}
    \mathcal{J}_4^\eta(0,n_1,n_2,c_1,c_2)=\int_\R U_0(y) \mathcal{J}_{3}^\eta(LMy,n_1,c_1)\overline{\mathcal{J}_{3}^\eta(LMy,n_2,c_2)}dy\ll \frac{(MN)^{\frac{1}{3}} T^{\epsilon}}{C_0N^{\beta}},
\end{equation*}
where we use $Y \leq N^\beta T^{-\epsilon}$. As a result, the contribution of $m=0$ to~\eqref{RNAfterPoisson} is bounded by \begin{equation*}
    LM\sum_{\frac{C_0}{m_0}\leq c<\frac{2C_0}{m_0}} \frac{1}{c^2}\sum_{ac'=c}\mu(a)c' \mathop{\sum\sum}_{\substack{\frac{C_0T^{-\epsilon}}{N^{1-\beta}}\ll n_1,n_2\ll \frac{C_0T^\epsilon}{N^{1-\beta}}\\n_1\equiv n_2\tpmod{c'}}}\frac{(MN)^{\frac{1}{3}} T^{\epsilon}}{C_0N^{\beta}} \ll \frac{LM^{\frac{4}{3}} T^\epsilon}{N^{\frac{2}{3}}}.
\end{equation*}
It now transpires that
\begin{align*}
    \begin{split}
    \mathcal{R}(N) &\ll \frac{LM^{\frac{4}{3}} T^\epsilon}{N^{\frac{2}{3}}}+LM\sum_{0\neq |m|\ll \frac{C_0^2T^\epsilon}{m_0^2LM}}\sum_{c_0 \mid m}\mathop{\sum\sum}_{\substack{c_{1,0},c_{2,0} \mid c_0^\infty\\ (c_{1,0},c_{2,0})=1}}\mathop{\sum\sum}_{\substack{\frac{C_0}{c_0m_0}\leq c_{1,0}c_1',c_{2,0}c_2'<\frac{2C_0}{c_0m_0}\\(c_1',c_2')=(c_1'c_2',m)=1}} \frac{1}{c_0^2c_{1,0}c_{2,0}c_1'c_2'}\\
    & \quad \times \mathop{\sum\sum}_{\substack{\frac{C_0T^{-\epsilon}}{N^{1-\beta}}\ll n_1,n_2\ll \frac{C_0T^\epsilon}{N^{1-\beta}}\\(n_1,c_0c_1'm_0)=(n_2,c_0c_2'm_0)=1}} \mathop{\sumast_{\alpha_1\tpmod{c_0c_{1,0}}}\sumast_{\alpha_2\tpmod{c_0c_{2,0}}}}_{ \alpha_1c_{2,0}c_2'+\alpha_2c_{1,0}c_1'\equiv \mp \frac{m}{c_0}\tpmod{c_0c_{1,0}c_{2,0}}}e\left(\frac{\overline{\alpha_1c_1'n_1}}{c_0c_{1,0}}+\frac{\overline{\alpha_2c_2'n_2}}{c_0c_{2,0}}\right)\\
    & \quad \times e\left(\mp\frac{c_{2,0}c_2'\overline{c_{1,0}mn_1}}{c_1'}\mp\frac{c_{1,0}c_1'\overline{c_{2,0}mn_2}}{c_2'}\right)\mathcal{J}_4^\eta(m,n_1,n_2,c_0c_{1,0}c_1',c_0c_{2,0}c_2').
    \end{split}
\end{align*}

\subsection{Additive Reciprocity}\label{additive-reciprocity}
Additive reciprocity implies
\begin{align*}
    \mathcal{R}(N) &\ll \frac{LM^{\frac{4}{3}} T^\epsilon}{N^{\frac{2}{3}}}+LM\sum_{0\neq |m|\ll \frac{C_0^2T^\epsilon}{m_0^2LM}}\sum_{c_0 \mid m}\mathop{\sum\sum}_{\substack{c_{1,0},c_{2,0} \mid c_0^\infty\\ (c_{1,0},c_{2,0})=1}}\mathop{\sum\sum}_{\substack{\frac{C_0}{c_0m_0}\leq c_{1,0}c_1',c_{2,0}c_2'<\frac{2C_0}{c_0m_0}\\(c_1',c_2')=(c_1'c_2',m)=1}} \frac{1}{c_0^2c_{1,0}c_{2,0}c_1'c_2'}\\
    & \quad \times \mathop{\sum\sum}_{\substack{\frac{C_0T^{-\epsilon}}{N^{1-\beta}}\ll n_1,n_2\ll \frac{C_0T^\epsilon}{N^{1-\beta}}\\(n_1,c_0c_1'm_0)=(n_2,c_0c_2'm_0)=1}} \mathop{\sumast_{\alpha_1\tpmod{c_0c_{1,0}}}\sumast_{\alpha_2\tpmod{c_0c_{2,0}}}}_{ \alpha_1c_{2,0}c_2'+\alpha_2c_{1,0}c_1'\equiv \mp \frac{m}{c_0}\tpmod{c_0c_{1,0}c_{2,0}}}e\left(\frac{\overline{\alpha_1c_1'n_1}}{c_0c_{1,0}}+\frac{\overline{\alpha_2c_2'n_2}}{c_0c_{2,0}}\right)\\
    & \quad \times e\left(\pm\frac{c_{2,0}c_2'\overline{c_1'}}{c_{1,0}mn_1}\pm\frac{c_{1,0}c_1'\overline{c_2'}}{c_{2,0}mn_2}\right)\mathcal{J}_5^\eta(m,n_1,n_2,c_0,c_{1,0},c_{2,0},c_1',c_2'),
\end{align*}
where
\begin{multline*}
    \mathcal{J}_5^\eta(m,n_1,n_2,c_0,c_{1,0},c_{2,0},c_1',c_2')\\
    \coloneqq e\left(\mp\frac{c_{2,0}c_2'}{c_{1,0}c_1'mn_1}\mp\frac{c_{1,0}c_1'}{c_{2,0}c_2'mn_2}\right)\mathcal{J}_4^\eta(m,n_1,n_2,c_0c_{1,0}c_1',c_0c_{2,0}c_2').
\end{multline*}
Tracing back to the definitions~\eqref{eq:J1},~\eqref{eq:J2},~\eqref{eq:J3}, and~\eqref{eq:J4} shows
\begin{multline*}
    \mathcal{J}_5^\eta(m,n_1,n_2,c_0,c_{1,0},c_{2,0},c_1',c_2')\\
    =T^{2\epsilon}\left(\frac{(MN)^{\frac{1}{3}}}{C_0}\right)^\eta \int_\R \int_{0}^\infty\int_{0}^\infty  W(x_1)W(x_2) U_0(y)
    \Phi_{m,n_1,n_2}^{c_0,c_{1,0},c_{2,0}}(c_1',c_2',x_1,x_2,y)\\
    \times e\left(\alpha(Nx_1)^\beta-\frac{n_1Nx_1}{c_0c_{1,0}c_1'}-\alpha(Nx_2)^\beta+\frac{n_2Nx_2}{c_0c_{2,0}c_2'}\right)dx_1dx_2dy,
\end{multline*}
where 
\begin{align*}
    \Phi_{m,n_1,n_2}^{c_0,c_{1,0},c_{2,0}}(c_1',c_2',x_1,x_2) &\coloneqq T^{-2\epsilon}\left(\frac{m_0}{C_0}\right)^{2\epsilon \eta} V_{0} \left(\frac{bc_1m_0}{C}, u \right)V_{0} \left(\frac{bc_2m_0}{C}, u \right)\\
    &\quad \times e\left(\mp\frac{c_{2,0}c_2'}{c_{1,0}c_1'mn_1}\mp\frac{c_{1,0}c_1'}{c_{2,0}c_2'mn_2}-\frac{mLMy}{c_1c_2}\right)\\
    &\quad \times \Phi\left(x_1+\frac{bc_1m_0Cu}{N}, \frac{LMy}{c_1^3m_0} \right)\overline{\Phi\left(x_2+\frac{bc_2m_0Cu}{N}, \frac{LMy}{c_2^3m_0} \right)}\\
    &\quad \times \left(\frac{C_0^3\left(x_1+\frac{bc_1m_0Cu}{N}\right)}{c_1^3m_0^3}\right)^{(\frac{1}{6}+\frac{\epsilon}{3}-\frac{it}{3})\eta} \left(\frac{C_0^3\left(x_2+\frac{bc_2m_0Cu}{N}\right)}{c_2^3m_0^3}\right)^{(\frac{1}{6}+\frac{\epsilon}{3}+\frac{it}{3})\eta}
\end{align*}
is a $T^\epsilon$-inert function satisfying $\Phi_{m,n_1,n_2}^{c_0,c_{1,0},c_{2,0}}(c_1',c_2',x_1,x_2)\ll 1$.

It is also convenient to remove the coprimality condition $(c_1',c_2')=1$ via M\"{o}bius inversion:
\begin{align*}
    \mathcal{R}(N) &\ll \frac{LM^{\frac{4}{3}} T^\epsilon}{N^{\frac{2}{3}}}+LM\sum_{0\neq |m|\ll \frac{C_0^2T^\epsilon}{m_0^2LM}}\sum_{c_0 \mid m}\mathop{\sum\sum}_{\substack{c_{1,0},c_{2,0} \mid c_0^\infty\\ (c_{1,0},c_{2,0})=1}}\mathop{\sum\sum\sum}_{\substack{\frac{C_0}{c_0m_0}\leq ac_{1,0}c_1',ac_{2,0}c_2'<\frac{2C_0}{c_0m_0}\\(ac_1'c_2',m)=1}} \frac{\mu(a)}{a^2c_0^2c_{1,0}c_{2,0}c_1'c_2'}\\
    & \quad \times \mathop{\sum\sum}_{\substack{\frac{C_0T^{-\epsilon}}{N^{1-\beta}}\ll n_1,n_2\ll \frac{C_0T^\epsilon}{N^{1-\beta}}\\(n_1,ac_0c_1'm_0)=(n_2,ac_0c_2'm_0)=1}} \mathop{\sumast_{\alpha_1\tpmod{c_0c_{1,0}}}\sumast_{\alpha_2\tpmod{c_0c_{2,0}}}}_{ \alpha_1c_{2,0}c_2'+\alpha_2c_{1,0}c_1'\equiv \mp \frac{m}{c_0}\overline{a}\tpmod{c_0c_{1,0}c_{2,0}}}e\left(\frac{\overline{\alpha_1ac_1'n_1}}{c_0c_{1,0}}+\frac{\overline{\alpha_2ac_2'n_2}}{c_0c_{2,0}}\right)\\
    & \quad \times e\left(\pm\frac{c_{2,0}c_2'\overline{c_1'}}{c_{1,0}mn_1}\pm\frac{c_{1,0}c_1'\overline{c_2'}}{c_{2,0}mn_2}\right)\mathcal{J}_5^\eta(m,n_1,n_2,c_0,c_{1,0},c_{2,0},ac_1',ac_2').
\end{align*}

\subsection{Cauchy--Schwarz in \texorpdfstring{$c_{1}'$}{c1'}}

We now take advantage of the $(c_1',c_2')$-sums. To simplify the forthcoming analysis, we pull out common divisors of $n_1$ and $n_2$. Write $n_0=(n_1,n_2)$,~$n_{1,0}=(n_1,n_0^\infty)$, $n_{2,0}=(n_2,n_0^\infty)$, $(n_1/(n_0n_{1,0},m^\infty))=m_1$, $(n_2/(n_0n_{2,0},m^\infty))=m_2$, $n_1=m_1n_0n_{1,0}n_1'$ and $n_2=m_2n_0n_{2,0}n_2'$. Under this notation, our estimate transforms into
\begin{align*}
    \mathcal{R}(N) &\ll \frac{LM^{\frac{4}{3}} T^\epsilon}{N^{\frac{2}{3}}}+LM\sum_{0\neq |m|\ll \frac{C_0^2T^\epsilon}{m_0^2LM}}\sum_{c_0 \mid m}\mathop{\sum\sum}_{\substack{c_{1,0},c_{2,0} \mid c_0^\infty\\ (c_{1,0},c_{2,0})=1}}\mathop{\sum\sum\sum}_{\substack{\frac{C_0}{c_0m_0}\leq ac_{1,0}c_1',ac_{2,0}c_2'<\frac{2C_0}{c_0m_0}\\(ac_1'c_2',m)=1}} \frac{\mu(a)}{a^2c_0^2c_{1,0}c_{2,0}c_1'c_2'}\\
    & \quad \times \sum_{(n_0,ac_0c_1'c_2'm_0)=1}\mathop{\sum\sum}_{\substack{n_{1,0},n_{2,0}\mid n_0^\infty\\ (n_{1,0},n_{2,0})=1}}\mathop{\sum\sum}_{\substack{m_1,m_2 \mid m^\infty\\(m_1,m_2)=(m_1m_2,ac_0m_0n_0)=1\\(m_1,c_1')=(m_2,c_2')=1}}\mathop{\sum\sum}_{\substack{\frac{C_0T^{-\epsilon}}{N^{1-\beta}}\ll n_1=m_1n_0n_{1,0}n_1'\ll \frac{C_0T^\epsilon}{N^{1-\beta}}\\
    \frac{C_0T^{-\epsilon}}{N^{1-\beta}}\ll n_2=m_2n_0n_{2,0}n_2'\ll \frac{C_0T^\epsilon}{N^{1-\beta}}\\(n_1',n_2')=(n_1'n_2',ac_0m_0mn_0)=1\\ (n_1',c_1')=(n_2',c_2')=1}} \\
    & \quad \times \mathop{\sumast_{\alpha_1\tpmod{c_0c_{1,0}}}\sumast_{\alpha_2\tpmod{c_0c_{2,0}}}}_{ \alpha_1c_{2,0}c_2'+\alpha_2c_{1,0}c_1'\equiv \mp \frac{m}{c_0}\overline{a}\tpmod{c_0c_{1,0}c_{2,0}}}e\left(\frac{\overline{\alpha_1ac_1'n_1}}{c_0c_{1,0}}+\frac{\overline{\alpha_2ac_2'n_2}}{c_0c_{2,0}}\pm\frac{c_{2,0}c_2'\overline{c_1'}}{c_{1,0}mn_1}\pm\frac{c_{1,0}c_1'\overline{c_2'}}{c_{2,0}mn_2}\right)\\
    &\quad \times \mathcal{J}_5^\eta(m,n_1,n_2,c_0,c_{1,0},c_{2,0},ac_1',ac_2').
\end{align*}
Note that the second derivative test with the choice $Y\leq N^\beta T^{-\epsilon}$ implies that for any $T^\epsilon$-inert function $F$,
\begin{equation*}
    \int_0^\infty W(x)F(x)e\left(\alpha(Nx_1)^\beta-\frac{n_1Nx}{ac_0c_{1,0}c_1'}\right)dx \ll N^{-\frac{\beta}{2}}T^\epsilon.
\end{equation*}
By the Cauchy--Schwarz inequality to take out the $(a,c_0,c_{1,0},c_{2,0},c_1',\alpha_1,\alpha_2,m,n_0,n_{1,0},n_{2,0},n_1',n_2')$-sums and the $y$-integral, along with a lengthening parameter $R_1 \geq 1$, we deduce
\begin{align*}
    \mathcal{R}(N) &\ll \frac{LM^{\frac{4}{3}} T^\epsilon}{N^{\frac{2}{3}}}+\frac{LMT^\epsilon}{N^{\frac{\beta}{2}}}\left(\frac{(MN)^{\frac{1}{3}}}{C_0}\right)^\eta\Bigg\{\sum_{0\neq |m|\ll \frac{C_0^2T^\epsilon}{m_0^2LM}}\sum_{(a,m)=1}\sum_{c_0 \mid m}\mathop{\sum\sum}_{\substack{c_{1,0},c_{2,0} \mid c_0^\infty\\(c_{1,0},c_{2,0})=1}}\\
    &\quad \times \sum_{(n_0,ac_0c_1'c_2'm_0)=1}\mathop{\sum\sum}_{\substack{n_{1,0},n_{2,0}\mid n_0^\infty\\ (n_{1,0},n_{2,0})=1}}\mathop{\sum\sum}_{\substack{m_1,m_2 \mid m^\infty\\(m_1,m_2)=(m_1m_2,ac_0m_0n_0)=1}}\mathop{\sum\sum}_{\substack{\frac{C_0T^{-\epsilon}}{N^{1-\beta}}\ll n_1=m_1n_0n_{1,0}n_1'\ll \frac{C_0T^\epsilon}{N^{1-\beta}}\\
    \frac{C_0T^{-\epsilon}}{N^{1-\beta}}\ll n_2=m_2n_0n_{2,0}n_2'\ll \frac{C_0T^\epsilon}{N^{1-\beta}}\\(n_1',n_2')=(n_1'n_2',ac_0m_0mn_0)=1}}\\
    &\quad \times  \mathop{\sumast_{\alpha_1\tpmod{c_0c_{1,0}}}\sumast_{\alpha_2\tpmod{c_0c_{2,0}}}}\sum_{(c_1',mn_0n_1')=1}\frac{m_0mn_0n_1'n_2'}{a^2c_0^2c_{1,0}c_{2,0}^2C_0\sqrt{(mn_0)_\square}}U_0\left(\frac{ac_0c_{1,0}c_1'm_0}{C_0R_1}\right)\\
    &\quad \times \Bigg|\sum_{\substack{\frac{C_0}{c_0m_0}\leq ac_{2,0}c_2'<\frac{2C_0}{c_0m_0}\\(c_2',mn_0n_2')=1\\\alpha_1c_{2,0}c_2'+\alpha_2c_{1,0}c_1'\equiv \mp \frac{m}{c_0}\overline{a}\tpmod{c_0c_{1,0}c_{2,0}}}}\frac{1}{c_2'}e\left(\frac{\overline{\alpha_2ac_2'n_2}}{c_0c_{2,0}}\pm\frac{c_{2,0}c_2'\overline{c_1'}}{c_{1,0}mn_1}\pm\frac{c_{1,0}c_1'\overline{c_2'}}{c_{2,0}mn_2}\right)\\
    &\quad \times \int_{0}^\infty  W(x)\varphi(c_1',c_2',x)e\left(-\alpha(Nx)^\beta+\frac{n_2Nx}{ac_0c_{2,0}c_2'}\right)dx\Bigg|^2\Bigg\}^{\frac{1}{2}}
\end{align*}
for some $T^\epsilon$-inert function $\varphi(c_1',c_2',x)\ll 1$.

\subsection{Poisson Summation in \texorpdfstring{$c_{1}'$}{c1'}}

Opening the square, the $c_1'$-sum is now given by
\begin{equation*}
    \sum_{(c_1',mn_0n_1')=1}U_0\left(\frac{ac_0c_{1,0}c_1'm_0}{C_0R_1}\right)\varphi(c_1',c_2',x_2)\overline{\varphi(c_1',c_3',x_3)}e\left(\pm\frac{c_{2,0}\overline{c_1'}(c_2'-c_3')}{c_{1,0}mn_1}\pm\frac{c_{1,0}c_1'(\overline{c_2'}-\overline{c_3'})}{c_{2,0}mn_2}\right).
\end{equation*}
By Poisson summation (Lemma~\ref{Poisson-summation}), this is equal to
\begin{multline*}
    \frac{n_0C_0R_1}{ac_0c_{1,0}^2c_{2,0}m_0mn_1n_2}\sum_{c_1'}\mathcal{C}'(c_1',c_2',c_3')\int_\R U_0(y)\varphi\left(\frac{C_0R_1y}{ac_0c_{1,0}m_0},c_2',x_2\right)\\
    \times \overline{\varphi\left(\frac{C_0R_1y}{ac_0c_{1,0}m_0},c_3',x_3\right)}e\left(-\frac{c_1'n_0C_0R_1y}{ac_0c_{1,0}^2c_{2,0}m_0mn_1n_2}\right)dy,
\end{multline*}
where
\begin{equation*}
    \mathcal{C}'(c_1',c_2',c_3') \coloneqq \sum_{\substack{\gamma\tpmod{c_{1,0}c_{2,0}mn_1n_2/n_0}\\(\gamma,mn_0n_1')=1}}e\left(\pm\frac{c_{2,0}\overline{\gamma}(c_2'-c_3')}{c_{1,0}mn_1}\pm\frac{c_{1,0}\gamma(\overline{c_2'}-\overline{c_3'})}{c_{2,0}mn_2}+\frac{c_1'n_0\gamma}{c_{1,0}c_{2,0}mn_1n_2}\right).
\end{equation*}
Repeated integration by parts in the $y$-integral ensures an arbitrary saving unless
\begin{equation*}
    |c_1'|\ll \frac{ac_0c_{1,0}^2c_{2,0}m_0mn_1n_2}{n_0C_0R_1}T^\epsilon \ll \frac{ac_0c_{1,0}^2c_{2,0}m_0mC_0}{n_0N^{2-2\beta}R_1}T^\epsilon.
\end{equation*}
Choosing $R_1=1+\frac{ac_0c_{1,0}^2c_{2,0}m_0mC_0}{n_0N^{2-2\beta}}T^{2\epsilon}$ ensures an arbitrary saving unless $c_1'=0$, in which~case
\begin{align*}
    \mathcal{C}'(0, c_{2}^{\prime}, c_{3}^{\prime}) &= \sum_{\substack{\gamma \tpmod{c_{1, 0} c_{2, 0} m n_{1} n_{2} /n_0}\\(\gamma,mn_0n_1')=1}} e \left(\frac{c_{2, 0} \overline{\gamma}(c_{2}^{\prime}-c_{3}^{\prime})}{c_{1, 0}mn_1}+\frac{c_{1, 0} \gamma(\overline{c_{2}^{\prime}}-\overline{c_{3}^{\prime}})}{c_{2, 0} mn_2} \right)\\
    &= n_2'\delta(c_2'\equiv c_3'\tpmod{n_2'})\\
    &\quad \times S(c_{1,0}^2m_1n_{1,0}n_1'\overline{n_2'}(\overline{c_{2}^{\prime}}-\overline{c_{3}^{\prime}}),c_{2,0}^2m_2n_{2,0}(c_{2}^{\prime}-c_{3}^{\prime});c_{1,0}c_{2,0}m_1m_2mn_0n_{1,0}n_{2,0}n_1').
\end{align*}
Because $c_{1,0} \mid c_0^\infty \mid m^\infty$, we observe that for any $p \mid c_{1,0}$, the Kloosterman sum evaluates to $0$ unless the $p$-adic valuation obeys
\begin{equation*}
     v_p(c_{1,0}^2(\overline{c_{2}^{\prime}}-\overline{c_{3}^{\prime}}))=v_p(c_{2}^{\prime}-c_{3}^{\prime}), \qquad c_{1,0}(c_{1,0}^\infty,m) \mid c_{2}^{\prime}-c_{3}^{\prime}.
\end{equation*}
By $p^k \mid c_{2}^{\prime}-c_{3}^{\prime} \Leftrightarrow p^k \mid \overline{c_{2}^{\prime}}-\overline{c_{3}^{\prime}}$, the former condition cannot hold unless $c_2^\prime=c_3^\prime$. Therefore, the Kloosterman sum evaluates to $0$ unless $c_{1,0}(c_{1,0}^\infty,m) \mid c_{2}^{\prime}-c_{3}^{\prime}$. Similarly, we obtain the condition $c_{2,0}m_1m_2n_{1,0}n_{2,0}(mn_0,(c_{2,0}m_1m_2n_{1,0}n_{2,0})^\infty) \mid c_{2}^{\prime}-c_{3}^{\prime}$. Writing $d=(mn_0,(c_{2,0}m_1m_2n_{1,0}n_{2,0})^\infty)$, one derives
\begin{multline*}
    \mathcal{C}'(0, c_{2}^{\prime}, c_{3}^{\prime}) =\varphi(d)c_{1,0}c_{2,0}m_1m_2n_{1,0}n_{2,0}n_2' \delta(c_2'\equiv c_3'\tpmod{c_{1,0}c_{2,0}dm_1m_2n_{1,0}n_{2,0}n_2'})\\
    \times S\left(\frac{c_{1,0}n_1(\overline{c_{2}^{\prime}}-\overline{c_{3}^{\prime}})}{c_{2,0}dm_2n_{2,0}n_2'},\frac{c_{2,0}(c_{2}^{\prime}-c_{3}^{\prime})}{c_{1,0}dm_1n_{1,0}};\frac{mn_0n_1'}{d}\right),
\end{multline*}
where $\varphi(d)$ denotes Euler's totient function. Since $(mn_0,n_1)=1$, we arrive at
\begin{multline*}
    \mathcal{C}'(0, c_{2}^{\prime}, c_{3}^{\prime}) =\varphi(d)c_{1,0}c_{2,0}m_1m_2n_{1,0}n_{2,0}n_2'\\
    \times \sum_{b_0b_1=n_1'}\mu(b_0)b_1\delta(c_2'\equiv c_3'\tpmod{b_1c_{1,0}c_{2,0}dm_1m_2n_{1,0}n_{2,0}n_2'})\\
    \times S\left(\frac{c_{1,0}(\overline{c_{2}^{\prime}}-\overline{c_{3}^{\prime}})}{c_{2,0}dm_2n_{2,0}n_2'},\frac{c_{2,0}\overline{b_0}(c_{2}^{\prime}-c_{3}^{\prime})}{b_1c_{1,0}dm_1n_{1,0}};\frac{mn_0}{d}\right).
\end{multline*}
On the other hand, there holds
\begin{multline*}
    \begin{dcases}\alpha_1c_{2,0}c_2'+\alpha_2c_{1,0}c_1'\equiv \mp \frac{m}{c_0}\overline{a}\tpmod{c_0c_{1,0}c_{2,0}},\\
    \alpha_1c_{2,0}c_3'+\alpha_2c_{1,0}c_1'\equiv \mp \frac{m}{c_0}\overline{a}\tpmod{c_0c_{1,0}c_{2,0}}\end{dcases}\\
    \Longleftrightarrow \quad \begin{dcases}\alpha_1c_{2,0}c_2'+\alpha_2c_{1,0}c_1'\equiv \mp \frac{m}{c_0}\overline{a}\tpmod{c_0c_{1,0}c_{2,0}},\\
    c_2'\equiv c_3'\tpmod{c_0c_{1,0}}.\end{dcases}
\end{multline*}
Because $c_0 \mid m$ and $(m,c_{2,0}^\infty) \mid d$, we have that $c_2'\equiv c_3'\tpmod{c_0c_{1,0}c_{2,0}}$.

Gathering the above observations together yields
\begin{align*}
    \mathcal{R}(N) &\ll \frac{LM^{\frac{4}{3}} T^\epsilon}{N^{\frac{2}{3}}}+\frac{LMT^\epsilon}{N^{\frac{\beta}{2}}}\left(\frac{(MN)^{\frac{1}{3}}}{C_0}\right)^\eta\Bigg\{\sum_{0\neq |m|\ll \frac{C_0^2T^\epsilon}{m_0^2LM}}\sum_{(a,m)=1}\sum_{c_0 \mid m}\mathop{\sum\sum}_{\substack{c_{1,0},c_{2,0} \mid c_0^\infty\\(c_{1,0},c_{2,0})=1}}\\
    &\quad \times \sum_{(n_0,ac_0c_1'c_2'm_0)=1}\mathop{\sum\sum}_{\substack{n_{1,0},n_{2,0}\mid n_0^\infty\\ (n_{1,0},n_{2,0})=1}}\mathop{\sum\sum}_{\substack{m_1,m_2 \mid m^\infty\\(m_1,m_2)=(m_1m_2,ac_0m_0n_0)=1}}\mathop{\sum\sum}_{\substack{\frac{C_0T^{-\epsilon}}{N^{1-\beta}}\ll n_1=m_1n_0n_{1,0}n_1'\ll \frac{C_0T^\epsilon}{N^{1-\beta}}\\
    \frac{C_0T^{-\epsilon}}{N^{1-\beta}}\ll n_2=m_2n_0n_{2,0}n_2'\ll \frac{C_0T^\epsilon}{N^{1-\beta}}\\(n_1',n_2')=(n_1'n_2',ac_0m_0mn_0)=1}}\\
    &\quad \times  \mathop{\sumast_{\alpha_1\tpmod{c_0c_{1,0}}}\sumast_{\alpha_2\tpmod{c_0c_{2,0}}}}\frac{\varphi(d)m_1m_2n_2'R_1}{a^3c_0^3c_{1,0}^2c_{2,0}^2\sqrt{(mn_0)_\square}}\sum_{b_0b_1=n_1'}\mu(b_0)b_1\\
    &\quad \times \mathop{\sum\sum}_{\substack{\frac{C_0}{ac_0c_{2,0}m_0}\leq c_2',c_3'<\frac{2C_0}{ac_0c_{2,0}m_0}\\(c_2'c_3',mn_0n_2')=1\\\alpha_1c_{2,0}c_2'+\alpha_2c_{1,0}c_1'\equiv \mp \frac{m}{c_0}\overline{a}\tpmod{c_0c_{1,0}c_{2,0}}}}\frac{1}{c_2'c_3'}\mathcal{C}_1(c_2',c_3') \mathcal{I}_1(c_2',\ell)\Bigg\}^{\frac{1}{2}},
\end{align*}
where
\begin{equation*}
    \mathcal{C}_1(c_2',c_3')=\delta(c_2'\equiv c_3'\tpmod{q})S\left(\frac{c_{1,0}(\overline{c_{2}^{\prime}}-\overline{c_{3}^{\prime}})}{c_{2,0}dm_2n_{2,0}n_2'},\frac{c_{2,0}\overline{b_0}(c_{2}^{\prime}-c_{3}^{\prime})}{b_1c_{1,0}dm_1n_{1,0}};\frac{mn_0}{d}\right)
\end{equation*}
with the notation
\begin{equation*}
    d=(mn_0,(c_{2,0}m_1m_2n_{1,0}n_{2,0})^\infty), \qquad q=b_1[c_0,d]c_{1,0}c_{2,0}m_1m_2n_{1,0}n_{2,0}n_2',
\end{equation*}
and
\begin{multline*}
    \mathcal{I}_1(c_2',\ell) \coloneqq \int_\R \int_{0}^\infty \int_{0}^\infty W(x_2)W(x_3)U_0(y)\varphi\left(\frac{C_0R_1y}{ac_0c_{1,0}m_0},b_2c_2',x_2\right)\overline{\varphi\left(\frac{C_0R_1y}{ac_0c_{1,0}m_0},b_2c_3',x_3\right)}\\
    \times e\left(-\alpha(Nx_2)^\beta+\frac{n_2Nx_2}{ac_0c_{2,0}c_2'}+\alpha(Nx_3)^\beta-\frac{n_2Nx_3}{ac_0c_{2,0}c_3'}\right)dx_2dx_3dy.
\end{multline*}

\subsection{Iteration Ad Infinitum}
For any $k \in \N$, we iterate the above process $k$ times with a different lengthening parameter
\begin{equation*}
    R_2=1+\frac{ac_0c_{1,0}c_{2,0}^2m_0mC_0}{n_0N^{2-2\beta}}T^{2\epsilon}.
\end{equation*}
This implies
\begin{align*}
    \mathcal{R}(N) &\ll \frac{LM^{\frac{4}{3}} T^\epsilon}{N^{\frac{2}{3}}}+\frac{LMT^\epsilon}{N^{\beta(1-2^{-k-1})}}\left(\frac{(MN)^{\frac{1}{3}}}{C_0}\right)^\eta\Bigg\{\sum_{0\neq |m|\ll \frac{C_0^2T^\epsilon}{m_0^2LM}}\sum_{(a,m)=1}\sum_{c_0 \mid m}\mathop{\sum\sum}_{\substack{c_{1,0},c_{2,0} \mid c_0^\infty\\ (c_{1,0},c_{2,0})=1}}\\
    &\quad \times  \sum_{(n_0,ac_0c_1'c_2'm_0)=1}\mathop{\sum\sum}_{\substack{n_{1,0},n_{2,0}\mid n_0^\infty\\ (n_{1,0},n_{2,0})=1}}\mathop{\sum\sum}_{\substack{m_1,m_2 \mid m^\infty\\(m_1,m_2)=(m_1m_2,ac_0m_0n_0)=1}}\mathop{\sum\sum}_{\substack{\frac{C_0T^{-\epsilon}}{N^{1-\beta}}\ll n_1=m_1n_0n_{1,0}n_1'\ll \frac{C_0T^\epsilon}{N^{1-\beta}}\\
    \frac{C_0T^{-\epsilon}}{N^{1-\beta}}\ll n_2=m_2n_0n_{2,0}n_2'\ll \frac{C_0T^\epsilon}{N^{1-\beta}}\\(n_1',n_2')=(n_1'n_2',ac_0m_0mn_0)=1}}\\
    &\quad \times  \sum_{b_0b_1=n_1'} \frac{(b_1\varphi(d)m_1m_2n_2'R_1)^{2^k}R_2^{2^k-1}}{a^{2^{k+1}+1}c_0^{2^{k+1}}(c_{1,0}c_{2,0})^{3 \cdot 2^{k}-1}(n_{1,0}n_{2,0})^{2^k-1}(mn_0)_\square^{2^k-\frac{1}{2}}} \\
    &\quad \times   \mathop{\sum\sum}_{\substack{\frac{C_0}{ac_0c_{2,0}m_0}\leq c_{k+2}',c_{k+3}'<\frac{2C_0}{ac_0c_{2,0}m_0}\\(c_{k+2}'c_{k+3}',mn_0n_2')=1}}\frac{1}{c_{k+2}'c_{k+3}'}\mathcal{C}_{k+1}(c_{k+2}',c_{k+3}') \mathcal{I}_{k+1}(c_{k+2}',c_{k+3}')\Bigg\}^{\frac{1}{2^{k+1}}},
\end{align*}
where $\mathcal{C}_{k+1}(A, B)$ is defined recursively by
\begin{align*}
    \mathcal{C}_{k+1}(A, B) &\coloneqq \sum_{\substack{\gamma\tpmod{c_{1,0}c_{2,0}mn_1n_2/n_0}\\(\gamma,mn_0n_1')=1}}\mathcal{C}_{k}(\gamma,A)\mathcal{C}_{k}(\gamma,B),\\
    \mathcal{C}_1(A, B) &\coloneqq \delta(A\equiv B\tpmod{q})S\left(\frac{c_{1,0}(\overline{A}-\overline{B})}{c_{2,0}dm_2n_{2,0}n_2'},\frac{c_{2,0}\overline{b_0}(A-B)}{b_1c_{1,0}dm_1n_{1,0}};\frac{mn_0}{d}\right),
\end{align*}
and for $k \in \N$,
\begin{multline*}
    \mathcal{I}_{k+1}(A, B) \coloneqq 
        \int_\R \int_{0}^\infty \int_{0}^\infty W(x_1)W(x_2)U_0(y)\varphi_{k+1}\left(\frac{C_0Ry}{ac_0c_{1,0}m_0},A,x_1\right)\overline{\varphi_{k+1}\left(\frac{C_0Ry}{ac_0c_{1,0}m_0},B,x_2\right)}\\
        \times  e\left((-1)^k\left(\alpha(Nx_1)^\beta-\frac{n_2Nx_1}{ac_0c_{2,0}A}-\alpha(Nx_2)^\beta+\frac{n_2Nx_2}{ac_0c_{2,0}B}\right)\right)dx_1dx_2dy
\end{multline*}
with $\varphi_{k+1}=\varphi$ if $k$ is even and $\overline{\varphi}$ if $k$ is odd.

\subsection{Endgame}
By Lemma \ref{CharSumBound}, we obtain
\begin{equation*}
    \mathcal{C}_{k+1}(c_{k+2}',c_{k+3}')\ll \left(\frac{b_0mn_0}{[c_0,d]}C_1(C_2,c_{1,0}c_{2,0})\left(\frac{mn_0}{d},q^\infty\right)\right)^{2^k},
\end{equation*}
where
\begin{equation*}
C_1= \left(\frac{mn_0}{(mn_0,q^\infty)} \right)_\square, \qquad C_2=\frac{mn_0}{\left(mn_0,q^\infty\right)C_1}.
\end{equation*}
Taking $k$ sufficiently large and bounding everything else trivially yields
\begin{align}\label{eq:R-final}
    \begin{split}
    \mathcal{R}(N) &\ll \frac{LM^{\frac{4}{3}} T^\epsilon}{N^{\frac{2}{3}}}+LMT^\epsilon\frac{R}{N^\beta}\left(\frac{(MN)^{\frac{1}{3}}}{C_0}\right)^\eta\\
    &\quad \times \sup_{\cdots} \cdots \sup_{\cdots} \frac{\sqrt{b_1dm_1m_2n_2'R_1R_2}}{ac_0(c_{1,0}c_{2,0})^{\frac{3}{2}}\sqrt{n_{1,0}n_{2,0}(mn_0)_\square}}\sqrt{\frac{b_0mn_0}{[c_0,d]}C_1(C_2,c_{1,0}c_{2,0})\left(\frac{mn_0}{d},q^\infty\right)}\\
    &\ll \frac{LM^{\frac{4}{3}} T^\epsilon}{N^{\frac{2}{3}}}+\frac{C_0^2\sqrt{LM}T^\epsilon}{m_0N}\left(\frac{(MN)^{\frac{1}{3}}}{C_0}\right)^\eta\left(1+\frac{C_0^3}{m_0LMN^{2-2\beta}}\right).
    \end{split}
\end{align}
It now follows from~\eqref{RNSplit} and~\eqref{eq:R-final} that
\begin{align*}
    \mathcal{S}(N)\ll T^\epsilon\sup_{M\ll\frac{N^2T^\epsilon}{C^3}}\left(\sqrt{L}M^{\frac{5}{6}}N^{\frac{1}{3}}+CL^{\frac{1}{4}}M^{\frac{5}{12}}N^{\frac{1}{6}}\left(1+\frac{(MN)^{\frac{1}{6}}}{\sqrt{C}}\right)\left(1+\frac{C^{\frac{3}{2}}}{\sqrt{LM}N^{1-\beta}}\right)\right),
    \end{align*}
where $L\geq1$, $Y\leq N^\beta T^{-\epsilon}$, and $C\leq \sqrt{N}$. If we choose
\begin{equation*}
L=\left(\frac{N^2T^\epsilon}{C^3M}\right)^{\frac{1}{3}},
\end{equation*}
then the above estimate simplifies to
\begin{equation*}
    \mathcal{S}(N)\ll \left(\frac{N^{2}}{C^{\frac{5}{2}}}+\frac{N^{\frac{3}{2}}}{C^{\frac{5}{4}}}+C^{\frac{7}{4}}N^{\beta-\frac{1}{2}}\right) T^\epsilon.
\end{equation*}
The optimal choice of $C$ is given by
\begin{equation*}
C = \min\{N^{\frac{2-\beta}{3}},\sqrt{N}\}.
\end{equation*}
Altogether, we arrive at
\begin{equation}\label{eq:S-final}
    \mathcal{S}(N)\ll (N^{\frac{2}{3}+\frac{5\beta}{12}}+N^{\frac{7}{8}})T^\epsilon,
\end{equation}
where $\beta < \frac{4}{5}$. This completes the proof of Theorem \ref{main-2}. Moreover, inserting~\eqref{eq:S-final} into~\eqref{DyadicSmoothing} and choosing $Y=N^\beta T^{-\epsilon}$, we conclude that
\begin{equation*}
    \mathcal{A}_\pi(T, \alpha, \beta) \ll T^{\frac{2}{3}+\frac{5\beta}{12}+\epsilon}+T^{\frac{7}{8}+\varepsilon}+T^{\frac{19}{14}-\beta+\epsilon}
\end{equation*}
where $\beta < \frac{4}{5}$. The proof of Theorem~\ref{main} is now complete. \qed

\appendix

\section{Computations of Character Sums}

To handle the character sum that we encounter, we need the following lemmata.
\begin{lemma}\label{lem.charsum1}
    Let $A, B, C, k, u, v \in \N$ with $C$ squarefree. Consider the character sum defined recursively by \begin{align*}
        \mathcal{C}_1(A, B) &= S(u(\overline{A}-\overline{B}),v(A-B);C),\\
        \mathcal{C}_{k+1}(A, B) &= \sumast_{\gamma\tpmod{C}}\mathcal{C}_{k}(\gamma,A)\mathcal{C}_{k}(\gamma,B).
    \end{align*}
    Then we have that
    \begin{align*}
        \mathcal{C}_{k}(A, B) \ll ((C,uv)C)^{2^{k-1}-\frac{1}{2}}\sqrt{(C,A-B)}.
    \end{align*}
\end{lemma}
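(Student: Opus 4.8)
The plan is to prove the estimate one prime at a time and then multiply up, since both sides are (twisted-)multiplicative in the squarefree modulus~$C$. Twisted multiplicativity for Kloosterman sums writes $\mathcal{C}_1(A,B)$ as a product over $p\mid C$ of Kloosterman sums to prime moduli, with some arguments twisted by units, and the Chinese Remainder Theorem applied to each $\gamma$-summation propagates this factorisation to every~$\mathcal{C}_k$; the unit twists change neither the Weil bound nor the relevant gcd's, hence are harmless. Since $((C,uv)C)^{2^{k-1}-1/2}\sqrt{(C,A-B)}$ is multiplicative in~$C$, it suffices to prove, for each prime~$p$ and all $A,B$ coprime to~$p$, that
\begin{equation*}
|\mathcal{C}_k(A,B)| \ll_k ((p,uv)p)^{2^{k-1}-\frac12}\,(p,A-B)^{\frac12},
\end{equation*}
after which one multiplies over $p\mid C$, the resulting $2^{\omega(C)}$ being absorbed into a factor~$C^{\varepsilon}$.

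At a fixed prime~$p$ I would separate the degenerate and generic regimes. If $p\mid A-B$, then $\mathcal{C}_k(\gamma,B)=\mathcal{C}_k(\gamma,A)$, and I argue by induction on~$k$: for $k=1$ the sum is $S(0,0;p)=p-1\leq((p,uv)p)^{1/2}p^{1/2}$, while in the inductive step $\mathcal{C}_{k+1}(A,B)=\sumast_{\gamma\tpmod p}\mathcal{C}_k(\gamma,A)^2$, so the inductive bound on each factor together with the elementary estimate $\sumast_{\gamma\tpmod p}(p,\gamma-A)\leq 2p$ closes the induction. If $p\mid uv$ but $p\nmid A-B$, the Weil bound for the Kloosterman sum defining $\mathcal{C}_1$ forfeits its square-root saving in the argument divisible by~$p$, a loss of $p^{1/2}$ that is exactly accounted for by the $(p,uv)$ in the target, and the remainder of the iteration then proceeds as in the generic case.

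The heart of the matter is the generic regime $p\nmid uv(A-B)$, where crude absolute-value bounds are useless (bounding the $\gamma$-sum trivially loses $p^{1/2}$ at every step), so genuine cancellation is required. Here I would unfold the recursion into a single exponential sum: by induction each $\mathcal{C}_k(A,B)$ equals a pure exponential sum $\sum_{\vec r\in((\Z/p\Z)^{\ast})^{n_k}}e_p\!\big(f_k(\vec r;A,B)\big)$ in $n_k=2^k-1$ multiplicative variables, where $f_k$ is a Laurent polynomial whose coefficients are linear in $A,\overline A,B,\overline B$, the base case being the definition of the Kloosterman sum and the recursive step following because $\gamma$ enters $f_k(\vec r;\gamma,A)+f_k(\vec s;\gamma,B)$ only linearly in $\gamma$ and~$\overline\gamma$, so the $\gamma$-summation produces exactly one new Kloosterman variable and $n_{k+1}=2n_k+1$. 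I would then invoke the theorem of Adolphson--Sperber~\cite{AdolphsonSperber1993}: provided $f_k$ is non-degenerate with respect to its Newton polytope~$\Delta_k$, the sum is $\ll_{\Delta_k}p^{n_k/2}=p^{2^{k-1}-1/2}$, and since $\Delta_k$, hence the implied constant, depends only on~$k$, the claimed local bound follows.

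The main obstacle is verifying this non-degeneracy uniformly in~$k$: one must track how $\Delta_k$ is built from $\Delta_{k-1}$ under the operation ``double the variables and adjoin one Kloosterman variable'', and show that the only obstructions to full square-root cancellation are the resonances already excluded, namely $A\equiv B\pmod p$ and coincidences forcing some of the $r_i$ to collapse. I expect this to reduce to an inductive study of the faces of~$\Delta_k$; it is the one step that genuinely uses $\ell$-adic cohomology rather than elementary manipulation, and it is a simpler instance of the character-sum analysis carried out in~\cite{AggarwalLeungMunshi2022}.
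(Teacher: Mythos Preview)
Your approach is essentially the same as the paper's: reduce to prime modulus via the Chinese Remainder Theorem, handle the generic case $p\nmid uv(A-B)$ by unfolding the recursion into a $(2^{k}-1)$-variable exponential sum and invoking Adolphson--Sperber, and treat the degenerate case $p\mid A-B$ by exactly the same induction using $\sumast_{\gamma\tpmod p}(p,\gamma-A)\ll p$. The one minor divergence is the case $p\mid uv$, where the paper simply uses the trivial bound $|\mathcal{C}_k|\le p^{2^{k}-1}$ (which already matches the target $((p,uv)p)^{2^{k-1}-1/2}=p^{2^{k}-1}$) rather than attempting to propagate a $\sqrt{p}$ loss through the iteration; this is cleaner and sidesteps the issue that the Newton polytope degenerates when a coefficient of $f_k$ vanishes modulo~$p$.
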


\begin{proof}
    We first assume that $C=p$ is prime. If $p \mid uv$, then trivial bound $\mathcal{C}_k\ll p^{2^k-1}$ follows. On the other hand, if $p\nmid uv(A-B)$, then we apply the method of Adolphson--Sperber~\cite{AdolphsonSperber1993}. For $k=1$, the Weil bound for Kloosterman sums implies $\mathcal{C}_1(A, B)\ll \sqrt{p}$. For $k=2$,
    \begin{align*}
    \mathcal{C}_2(A, B)=\mathop{\sumast \ \sumast \ \sumast}_{\alpha_1,\alpha_2,\beta\tpmod{p}}e\left(\frac{f_2(\alpha_1,\alpha_2,\beta)}{p}\right),
    \end{align*}
    where \begin{equation*}
        f_2(\alpha_1,\alpha_2,\beta)=u\alpha_1\overline{\beta}+v\overline{\alpha_1}\beta-u\alpha_1\overline{A}-v\overline{\alpha_1}A
        +u\alpha_2\overline{\beta}+v\overline{\alpha_2}\beta-u\alpha_2\overline{B}-v\overline{\alpha_2}B.
    \end{equation*}
    Considering $f_2(x,y,z)\in \mathbb{F}_p^{\times}[x,y,z,(xyz)^{-1}]$, we see that the locus of $\frac{\partial f_{2,\sigma}}{\partial x} = \frac{\partial f_{2,\sigma}}{\partial y} = \frac{\partial f_{2,\sigma}}{\partial z} = 0$ is empty in $(\mathbb{F}_p^{\times})^{3}$ for any face $\sigma$ of the Newton polyhedron $\Delta(f)$ since $p\nmid uv(A-B)$. Hence the work of Adolphson--Sperber~\cite{AdolphsonSperber1993} yields $\mathcal{C}_2(A, B)\ll p^{\frac{3}{2}}$.
    
    For $k\geq3$, we observe that the character sum $\mathcal{C}_{k}(A, B)$ can be written in the form
    \begin{align*}
        \mathcal{C}_{k}(A, B)=\mathop{\sumast\cdots\sumast}_{\substack{\alpha_1,\ldots,\alpha_{2^{k-1}}\tpmod{p}\\\beta_1,\ldots,\beta_{2^{k-2}}\tpmod{p}\\\gamma_1,\ldots,\gamma_{2^{k-2}-1}\tpmod{p}}}e\left(\frac{f_{k}(\alpha_1,\ldots,\alpha_{2^{k-1}},\beta_1,\ldots,\beta_{2^{k-2}},\gamma_1,\ldots,\gamma_{2^{k-2}-1})}{p}\right),
    \end{align*}
    where
    \begin{multline*}
        f_k(\alpha_1,\ldots,\alpha_{2^{k-1}},\beta_1,\ldots,\beta_{2^{k-2}},\gamma_1,\ldots,\gamma_{2^{k-2}-1})\\
        =\sum_{j=1}^{2^{k-1}}\left(u\alpha_j\overline{\beta_{\lceil \frac{j}{2}\rceil}}+v\overline{\alpha_j}\beta_{\lceil \frac{j}{2}\rceil}-u\alpha_j\overline{\gamma_{g(j)}}-v\overline{\alpha_j}\gamma_{g(j)}\right)
        -u\alpha_{2^{k-2}}\overline{A}-u\overline{\alpha_{2^{k-2}}}A-v\alpha_{2^{k-1}}\overline{B}-v\overline{\alpha_{2^{k-1}}}B.
    \end{multline*}
    For $j= 2^\ell+2^{\ell+2}h, \, 2^\ell+2^{\ell+1}(2h+1)$ with $\ell,h\geq0$, the function $g(j)$ above is given by
    \begin{align*}
        g(j)=2^{k-2}-2^{k-\ell-2}+h.
    \end{align*}
    Considering $f_k(x_1,\ldots,x_{2^{k}-1})\in \mathbb{F}_p^{\times}[x_1,\ldots,x_{2^{k}-1},(x_1\cdots x_{2^{k}-1})^{-1}]$, we see that locus of $\frac{\partial f_{k,\sigma}}{\partial x_j} = 0$ for all $1\leq j\leq 2^{k}-1$ is empty in $(\mathbb{F}_p^{\times})^{2^{k}-1}$ for any face $\sigma$ of the Newton polyhedron $\Delta(f_k)$ since $p\nmid uv(A-B)$. Hence the work of Adolphson--Sperber~\cite{AdolphsonSperber1993} yields $\mathcal{C}_k(A, B)\ll p^{2^{k-1}-\frac{1}{2}}$.

    Finally, we perform induction to prove that
    \begin{align*}
        \mathcal{C}_k(A, B)\ll p^{2^{k-1}-\frac{1}{2}}\sqrt{(p,A-B)}
    \end{align*}
    when $p\nmid uv$. We are left with the case where $p \mid (A-B)$. Note that when $k=1$, the Weil bound for Kloosterman sums justifies the desired claim. For $k\geq2$, we have \begin{align*}
        \mathcal{C}_k(A, B)=\sumast_{\gamma\tpmod{p}}\mathcal{C}_{k-1}(\gamma,A)\mathcal{C}_{k-1}(\gamma,B).
    \end{align*}
    By induction hypothesis, \begin{align*}
        \mathcal{C}_{k-1}(\gamma,A)\ll p^{2^{k-2}-\frac{1}{2}}\sqrt{(p,\gamma-A)}.
    \end{align*}
    Therefore, we obtain
    \begin{align*}
        \mathcal{C}_k(A, B)\ll p^{2^{k-1}}=p^{2^{k-1}-\frac{1}{2}}\sqrt{(p,A-B)}.
    \end{align*}
    The Chinese Remainder Theorem completes the proof of Lemma~\ref{lem.charsum1} for squarefree $C$.
\end{proof}

\begin{lemma}\label{CharSumBound}
    Let $A, B, C, k, u, v, q_1, q_2, q_3, q, Q \in \N$ be such that $C \mid Q$, $q_1,q_2,q_3 \mid q \mid Q$, and $(q_3, \frac{CQ}{q_3})=1$. Consider the character sum defined recursively by
    \begin{align*}
        \mathcal{C}_1(A, B) &=\delta(A\equiv B\tpmod{q})S\left(\frac{u(\overline{A}-\overline{B})}{q_1},\frac{v(A-B)}{q_2};C\right),\\
        \mathcal{C}_{k+1}(A, B) &= \sum_{\substack{\gamma\tpmod{Q}\\(\gamma, \frac{Q}{q_3})=1}}\mathcal{C}_{k}(\gamma,A)\mathcal{C}_{k}(\gamma,B).
    \end{align*}
    Then we have that
    \begin{align*}
        \mathcal{C}_k(A, B)\ll \left(\frac{Q}{q}C_1(C_2,uv)(C,q^\infty)\right)^{2^{k-1}},
    \end{align*}
    where $C_1C_2=\frac{C}{(C,q^\infty)}$ with $C_1$ squarefull, $C_2$ squarefree, and $(C_1,C_2)=1$.
\end{lemma}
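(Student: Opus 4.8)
The plan is to reduce the ``dressed'' recursion in Lemma~\ref{CharSumBound} to the clean one already estimated in Lemma~\ref{lem.charsum1}, via a prime-by-prime analysis. First I would exploit multiplicativity: since $C \mid Q$ and $q_{1},q_{2},q_{3} \mid q \mid Q$, the Chinese Remainder Theorem together with standard twisted multiplicativity of Kloosterman sums shows that $\mathcal{C}_{1}$ (the congruence $\delta(A\equiv B\tpmod q)$ and the Kloosterman factor) and the defining recursion (the range $\gamma\tpmod Q$ and the condition $(\gamma,Q/q_{3})=1$) all factor through the prime-power components of $Q$. Hence it suffices to prove the bound when $Q=p^{e}$ with $C=p^{c}$, $q=p^{a}$, $q_{i}=p^{a_{i}}$ and $0\le c,a\le e$, $0\le a_{i}\le a$. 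In carrying out this factorisation one folds the unit factors coming from twisted multiplicativity, and the factors $\overline{q_{1}},\overline{q_{2}}$ on the primes where $q_{1},q_{2}$ are invertible, into the local ``$u$'' and ``$v$''; this leaves $(p,uv)$ unchanged.

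Next I would record the dichotomy forced by the hypothesis $(q_{3},CQ/q_{3})=1$: at each prime one has $v_{p}(CQ/q_{3})=c+e-a_{3}\ge c$ because $a_{3}\le e$, so $\min(a_{3},c+e-a_{3})=0$ forces either $a_{3}=0$ (i.e.\ $p\nmid q_{3}$, hence also $p\nmid q_{1}q_{2}$, and the inner sum runs over all of $(\Z/p^{e}\Z)^{\times}$) or $c=0$ (i.e.\ $p\nmid C$). If $c=0$, the local Kloosterman sum is identically $1$, $\mathcal{C}_{1}$ reduces to $\delta(A\equiv B\tpmod{p^{a}})$, and an immediate induction on $k$ yields $\mathcal{C}_{k}\ll (p^{e-a})^{2^{k-1}}$, the desired $(Q/q)_{p}$-contribution. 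If $c\ge1$ (so $a_{3}=0$) I would split into three subcases according to $p\mid q$; $p\nmid q$ with $p^{2}\mid C$; and $p\nmid q$ with $p\,\|\,C$. In the first two subcases the trivial bound $|S(\cdot,\cdot\,;p^{c})|\le p^{c}$ together with the exponent recursion $\alpha_{k+1}=2\alpha_{k}+(e-a)$, $\alpha_{1}=c$ (using the $\delta$-condition in the first subcase, where $a\ge1$) gives $\mathcal{C}_{k}\ll (p^{c}\cdot p^{e-a})^{2^{k-1}}$, which feeds $(C,q^{\infty})_{p}$ in the first subcase and $(C_{1})_{p}$ in the second. The remaining subcase $p\,\|\,C$, $p\nmid q$ is the heart of the argument: here the summand depends on $\gamma$ only modulo $p$, so peeling off a factor $p^{e-1}$ at each step writes $\mathcal{C}_{k}$ at $p$ as $(p^{e-1})^{2^{k-1}-1}$ times exactly the sum treated in Lemma~\ref{lem.charsum1} with squarefree modulus $p$, and that lemma gives $\ll((p,uv)p)^{2^{k-1}-\frac12}\sqrt{(p,A-B)}$.

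Finally I would reassemble over primes and verify the bookkeeping. Multiplying the local bounds, the $p$-exponent contributed in the last subcase is $(e-1)(2^{k-1}-1)+(2^{k-1}-\tfrac12)+\tfrac12=e(2^{k-1}-1)+1\le e\cdot2^{k-1}$ (using $e\ge c\ge1$), which is exactly what $(Q/q)_{p}^{2^{k-1}}$ allows, while the surviving factor $(p,uv)^{2^{k-1}-\frac12}$ is absorbed by $(C_{2},uv)_{p}^{2^{k-1}}$; the other subcases were matched above. Collecting the four prime types reproduces $\big(\tfrac{Q}{q}\,C_{1}(C_{2},uv)(C,q^{\infty})\big)^{2^{k-1}}$. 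I expect the main obstacle to be not any single estimate but the reduction and the bookkeeping: one must push the fractional arguments $u/q_{1},v/q_{2}$ and the several interlocking coprimality and divisibility conditions through the CRT factorisation consistently, and one must observe that the seemingly lossy lengthening factors $p^{e-1}$ in the squarefree subcase are precisely what $Q/q$ pays for and that they dominate the $\sqrt{C_{2}}$-type loss inherited from Lemma~\ref{lem.charsum1}, since $e\ge1$.
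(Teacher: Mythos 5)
Your proof is correct and follows essentially the same route as the paper's: CRT-factor $\mathcal{C}_k$, bound trivially the primes dividing $q$ and those where $p^{2} \mid C$, and reduce the squarefree part of $C$ prime to $q$ to Lemma~\ref{lem.charsum1}; the paper merely carries out the CRT split in two stages ($q^\infty$-part versus its complement, then squarefull versus squarefree within the latter) rather than fully prime by prime, and the resulting estimates agree. One small misstep in your prose: $p \nmid q_3$ does not by itself imply $p \nmid q_1 q_2$, since $q_1, q_2$ are only assumed to divide $q$, not $q_3$; this does not damage the argument because you subsequently split by whether $p \mid q$ and only need $q_1, q_2$ invertible in the sub-subcases where $p \nmid q$, where it follows from $q_1, q_2 \mid q$.
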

\begin{proof}
    Write $C_0=(C,q^\infty)$, $C=C_0C'$, $Q_0=(Q,q^\infty)$ and $Q=Q_0Q'$. Then
    \begin{align*}
        \mathcal{C}_k(A, B)=\mathcal{C}_{k,1}(A, B)\mathcal{C}_{k,2}(A, B),
    \end{align*}
    where \begin{align*}
        \mathcal{C}_{1,1}(A, B)&=\delta(A\equiv B\tpmod{q})S\left(\overline{C'^2}\frac{u(\overline{A}-\overline{B})}{q_1},\frac{v(A-B)}{q_2};C_0\right),\\
        \mathcal{C}_{1,2}(A, B)&=S(\overline{C_0^2q_1q_2}u(\overline{A}-\overline{B}),v(A-B);C'),\\
        \mathcal{C}_{k+1,1}(A, B)&=\sum_{\substack{\gamma\tpmod{Q_0}\\(\gamma, \frac{Q_0}{q_3})=1}}\mathcal{C}_{k,1}(\gamma,A)\mathcal{C}_{k,1}(\gamma,B),\\
        \mathcal{C}_{k+1,2}(A, B) &= \sumast_{\gamma\tpmod{Q'}}\mathcal{C}_{k,2}(\gamma,A)\mathcal{C}_{k,2}(\gamma,B)\\
        &=\varphi\left(\frac{Q'}{(Q',C^\infty)}\right)\frac{(Q',C^\infty)}{C'}\sumast_{\gamma\tpmod{C'}}\mathcal{C}_{k,2}(\gamma,A)\mathcal{C}_{k,2}(\gamma,B).
    \end{align*}
    For $\mathcal{C}_{k,1}(A, B)$, bounding the Kloosterman sum trivially implies
    \begin{align*}
        \mathcal{C}_{k,1}(A, B)\ll \left(\frac{Q_0}{q}\right)^{2^{k-1}-1}C_0^{2^{k-1}}.
    \end{align*}
    For $\mathcal{C}_{k,2}(A, B)$, note that the character sum is an iteration of Kloosterman sums. Factorising $C'=C_1C_2$ uniquely with $C_1$ squarefull, $C_2$ squarefree, and $(C_1,C_2)=1$, and then applying the Chinese Remainder Theorem accordingly, Lemma \ref{lem.charsum1} yields
    \begin{align*}
        \mathcal{C}_{k,2}(A, B)\ll \left(\varphi\left(\frac{Q'}{(Q',C^\infty)}\right)\frac{(Q',C^\infty)}{C'}\right)^{2^{k-1}-1}C_1^{2^k-1}\left(\left(C_2,uv\right)C_2\right)^{2^{k-1}-\frac{1}{2}}\sqrt{(C_2,A-B)}.
    \end{align*}
    Combining the above estimates completes the proof of Lemma~\ref{CharSumBound}.
\end{proof}


\providecommand{\bysame}{\leavevmode\hbox to3em{\hrulefill}\thinspace}
\providecommand{\MR}{\relax\ifhmode\unskip\space\fi MR }
\providecommand{\MRhref}[2]{%
  \href{http://www.ams.org/mathscinet-getitem?mr=#1}{#2}
}
\providecommand{\href}[2]{#2}

\end{document}